\documentclass[reqno,11pt]{amsart}
\usepackage{amsmath,amssymb,amsfonts}

\usepackage{extpfeil}

\usepackage{tikz-cd}

\usepackage{tikz}
\usetikzlibrary{matrix,arrows}
\usepackage{verbatim}

\usepackage[pagebackref,colorlinks,citecolor={red!50!black},linkcolor={blue!80!black}]{hyperref}

\renewcommand*{\backref}[1]{}
\renewcommand*{\backrefalt}[4]{%
    \scriptsize%
    {
    \ifcase #1 (\textcolor{red}{Uncited.})%
          \or (Cited\ on p.~#2)%
          \else (Cited\ on pp.~#2)%
    \fi%
    }
}

\usepackage{orcidlink}

\usepackage{fourier}
\usepackage{inconsolata}
\usepackage[utf8]{inputenc}

\title{Colocalizing subcategories on differentially graded algebras}

\author[L. Alonso]{Leovigildo Alonso Tarr\'{\i}o \orcidlink{0000-0002-6896-0652}}
\address[L. A. T.]{CITMAGA\\
Departamento de Matem\'a\-ticas\\
Universidade de Santiago de Compostela\\
E-15782  Santiago de Compostela, Spain}
\email{leo.alonso@usc.es}

\author[A. Jerem\'{\i}as]{Ana Jerem\'{\i}as L\'opez \orcidlink{0000-0001-7964-1334}}
\address[A. J. L.]{CITMAGA\\
Departamento de Matem\'a\-ticas\\
Universidade de Santiago de Compostela\\
E-15782  Santiago de Compostela, Spain}
\email{ana.jeremias@usc.es}

\author[E. Loureiro]{Eduardo Loureiro Novo \orcidlink{0009-0009-6001-1928}}
\address[E. L. N.]{CITMAGA\\
Departamento de Matem\'a\-ticas\\
Universidade de Santiago de Compostela\\
E-15782  Santiago de Compostela, Spain}
\email{loureironovoe@gmail.com}

\thanks{This work has been partially supported by Xunta de Galicia's ED431C 2023/31 project with E.U.'s FEDER funds.}

\subjclass[2020]{18G80 (primary); 18E35, 18G35 (secondary)}

\date{May 25, 2025, 
 \emph{typeset}, \today}

\theoremstyle{plain}
\newtheorem{theorem}{Theorem}[section]
\newtheorem{proposition}[theorem]{Proposition}
\newtheorem{lema}[theorem]{Lemma}
\newtheorem{lemma}[theorem]{Lemma}
\newtheorem{corollary}[theorem]{Corollary}

\theoremstyle{remark}
\newtheorem{remark}[theorem]{Remark}
\newtheorem*{remark*}{Remark}

\theoremstyle{definition}
\newtheorem{definition}[theorem]{Definition}
\newtheorem*{ack}{Acknowledgements}
\newtheorem{cosa}[theorem]{}			

\numberwithin{equation}{theorem}

\newcommand{\CC}{{\mathcal C}}

\newcommand{\CF}{{\mathcal F}}
\newcommand{\CG}{{\mathcal G}}
\newcommand{\CH}{{\mathcal H}}

\newcommand{\CL}{\mathcal{L}}

\newcommand{\CP}{\mathcal{P}}

\newcommand{\CT}{\mathcal{T}}
\newcommand{\CV}{\mathcal{V}}

\newcommand{\CW}{\mathcal{W}}

\newcommand{\SC}{\mathsf{C}}
\newcommand{\SD}{\mathsf{D}}

\newcommand{\SL}{\mathsf{L}}
\newcommand{\Loc}{\mathsf{Loc}}

\renewcommand{\SS}{\mathsf{S}}

\newcommand{\ST}{\boldsymbol{\mathsf{T}}}

\newcommand{\D}{\boldsymbol{\mathsf{D}}}
\newcommand{\K}{\boldsymbol{\mathsf{K}}}
\newcommand{\LL}{\boldsymbol{\mathsf{L}}}
\newcommand{\R}{\boldsymbol{\mathsf{R}}}

\newcommand{\NN}{\mathbb{N}}
\newcommand{\ZZ}{\mathbb{Z}}

\newcommand{\ip}{{\mathfrak p}}
\newcommand{\iq}{{\mathfrak q}}
\newcommand{\ir}{{\mathfrak r}}
\newcommand{\im}{{\mathfrak m}}

 \newcommand{\hocolim}[1]{\begin{array}[t]{c} {\rm hocolim}\\[-7.5 pt]
 {\lto} \\[-7.5 pt] {\scriptstyle {#1}} \end{array}}
\newcommand{\dirlim}[1]{\begin{array}[t]{c} {\rm lim}\\[-7.5 pt]
 {\longrightarrow} \\[-7.5 pt] {\scriptstyle {#1}} \end{array}}

\newcommand{\lto}{\longrightarrow}

\newcommand{\liso}{\mathrel{\tilde{\lto}}}

\newcommand*{\longleftrightarrows}{
\ensuremath{%
\makebox[0pt][l]
{\raisebox{0.3ex}{\ensuremath{\longleftarrow}}}%
\raisebox{-0.3ex}{\ensuremath{\longrightarrow}}%
}}

\newcommand{\imp}{\Rightarrow}

\DeclareMathOperator{\Hom}{Hom}

\DeclareMathOperator{\rhom}{\R{}Hom}

\DeclareMathOperator{\cok}{Coker}

\DeclareMathOperator{\spec}{Spec}

\newcommand{\Ho}{\mathrm{H}}
\newcommand{\DGA}{\mathsf{DGA}}

\newcommand{\DGAM}{\mathsf{DGMod}}
\newcommand{\loc}{\mathsf{Loc}}
\newcommand{\col}{\mathsf{Coloc}}
\DeclareMathOperator{\ssupp}{supp}
\DeclareMathOperator{\cosupp}{cosupp}

\newcommand{\ie}{{\it i.e.~}}

\keywords{Derived categories, differential graded algebras, localization, colocalization.}

\begin{document}

\begin{abstract} Let $A$ be a bounded non positive commutative differential graded algebra $A$. Let $\D(A)$ its derived category of DG-modules. If $\D(A)$ is generated by the DG-modules corresponding to the residue fields of the ordinary ring $\Ho^0(A)$ then its localizing subcategories and its colocalizing subcategories are in bijection with the subsets of $\spec(\Ho^0(A))$. These results generalize well-known theorems by Neeman in \cite{Nct} and \cite{Ncs}, because any Noetherian ring satisfies this condition.
\end{abstract}

\maketitle
\tableofcontents

\section*{Introduction}

In his celebrated paper \cite{Nct} from 1992 Neeman proved that, for a Noetherian ring $A$, the localizing subcategories in the sense of Bousfield of its derived category $\D(A)$ are classified by the subsets of $\spec(A)$. The classification of localizing subcategories was generalized in 2004 to schemes in \cite{AJS3}, where the monoidal structure played a fundamental role. 

Every localizing subcategory of $\D(A)$ gives automatically a colocalizing subcategory by taking its right orthogonal. In 2011 Neeman showed \cite{Ncs} that these are the only colocalizing subcategories of $\D(A)$. A year later, the paper \cite{bik} opened a new point of view that allowed for a determination of colocalizing subcategories compatible with the monoidal structure by introducing the notion of cosupport, dualizing the notion of support in a derived category as defined by Neeman in \cite{Nct}. Armed with this point of view Verasdanis in his 2022 preprint \cite{v}, gave a classification of colocalizing categories for Noetherian schemes. Also  Barthel, Castellana, Heard, and Sanders in their 2023 preprint \cite{bchs} gave the corresponding result for the Balmer spectrum of general tt-categories under certain Noetherian hypothesis. 

In this line Shaul and Williamson \cite{SW} establish the classification of localizing and colocalizing subcategories of the derived category $\D(A)$ of a bounded non positive differential graded algebra under the hypothesis that the ring $\Ho^0(A)$ is Noetherian. Their method used a comparison between localizations in $\D(A)$ and in $\D(\Ho^0(A))$.

In this paper we give a generalization of Shaul and Williamson results, and, as a consequence, of the results by Neeeman, after all, an ordinary ring is an example of DG-algebra. Our approach is different from the previous authors inasmuch as there is a set of \emph{points} that determine the derived category. In precise terms a ring or a DG-algebra $A$ is called \emph{point generated} if the smallest localizing subcategory of its derived category $\D(A)$ containing all residue fields is the whole $\D(A)$. This property of $\D(A)$ is the key ingredient to obtain the classification results. Moreover, the proofs follow in a natural way form this condition. We show the classification of localizing subcategories in Theorem \ref{claslocalizantesdga} and the classification of colocalizaing subcategories in Theorem \ref{clascolocalizantesdga}. Moreover, we see that these classifications agree with the ones in $\D(\Ho^0(A))$. 

It is worth noting that a bounded non positive differential graded algebra $A$ such that $\Ho^0(A)$ is a Noetherian ring is point generated (Theorem \ref{k(p)generan}). In the tt-category setting, a main issue is to find the right notion of residue field, as is explained in \cite{bks}. There are plenty of examples of non Noetherian rings that are not point generated, see for one \cite{Oddball}. However recent results by Stevenson show that non Noetherian point generated rings abound (see \cite{St14}).

Let us now discuss briefly the contents of the paper. In the first section we gather the basic definitions of DG-algebras and their DG-modules. Its purpose is to fix the notation and give pointers to the literature. In section \ref{supresfi} we explain the DG-module structure of the residue fields of a non negative DG-algebra $A$ that turn out to correspond to the residue fields of $\Ho^0(A)$. With this notion a definition of support and cosupport is provided. In section \ref{pgDG} we define the notion of point generation and establish that the point generation of $\Ho^0(A)$ implies that of $A$.

The next two sections give the main results of this paper. Section \ref{clasloc} establishes in Theorem \ref{claslocalizantesdga} that for a a point generated commutative DG-algebra $A$ of finite amplitude, the localizing subcategories of its derived category $\D(A)$ are classified by the subsets of the spectrum of $\Ho^0(A)$, and also that they correspond to the localizing subcategories of $\D(\Ho^0(A))$. In the next section, \ref{clascoloc}, we prove the analogous result for colocalizing subcategories (Theorem \ref{clascolocalizantesdga}).

In the appendix we give a self contained proof of the fact that a Noetherian ring is point generated (Theorem \ref{LocK(x)generan}). This result is already present on \cite{coloc} in the setting of schemes.

\begin{ack}
We thank the comments of an anonymous referee.
\end{ack}

\section{Preliminaries}

We start recalling several well-known definitions to set the notations, for the convenience of the reader. We will mostly follow the treatment in \cite{Y1}.

\begin{cosa}\textbf{Differential graded rings}. 
A differential graded algebra is a graded ring $A=\bigoplus_{n\in\mathbb{Z}}A^n
$ together with a graded endomorphism $d\colon A\to A$ of degree 1, such that $d \circ d = 0$ and such that  the Leibniz rule holds, namely,
\[
d(ab)=d(a)\cdot b+(-1)^ia\cdot d(b),\quad a\in A^i,\ b\in A^j,\ i,j\in\mathbb{Z}.
\]
In other words $(A,d)$ is a complex such that the algebra structure
\[
A \otimes A \lto A
\]
is a homomorphism of complexes.

We say that $A$ is \emph{non positive} if $A^i=0$ for all $i>0$.

\emph{A homomorphism} of differential graded algebras is a graded ring homomorphism $f\colon A\to B$ that commutes with the differentials, \ie\/ $d_B \circ f = f \circ d_A$.

Differential graded algebras and their homomorphisms constitute a category that we will denote $\DGA$. The full subcategory of non positive differential graded algebras will be denoted by $\DGA^{\leq 0}$.

We will abbreviate the expression differential graded algebra by calling it simply DG-algebra.
\end{cosa}

\begin{cosa}
A DG-algebra is \emph{weakly commutative} if it satisfies $ba=(-1)^{ij}ab$ for all $a\in A^i,\ b\in A^j$. If furthermore $a^2=0$ for all $a\in A^i$ such that $i$ is odd, we say that $A$ is \emph{strongly commutative}. If $2$ is invertible in $A^0$, every weakly commutative DG-algebra is strongly commutative, so the difference is only relevant when the characteristic of $A^0$ is 2.
\end{cosa}

\begin{cosa}\textbf{Differential graded modules}.
A differential grade module (or simply DG-module) $M$ over a DG-algebra $A$ is a graded module (over the underlying graded algebra of $A$) together with a differential $d_M \colon M\to M$ of degree 1 satisfying
\[
    d_M(a\cdot m)=d_A(a)\cdot m +(-1)^i a\cdot d_M(m),\ \forall a\in A^i,\ m\in M.
\]
In other words, $M$ is a (cochain\footnote{We adopt throughout the convention of upper indices and differentials of positive degree 1. We will omit the adjective \emph{cochain} from now on.}) complex of abelian groups such that the action
\[
\cdot \colon A \otimes M \lto M
\]
is a homomorphism of complexes.

A homomorphism of graded modules is simply a homomorphism of complexes compatible with the respective actions of the DG-algebra $A$. We denote by $\DGAM(A)$ the category of DG-modules over $A$ and its homomorphisms.
\end{cosa}

\begin{cosa}\textbf{Homologies}.
Let $A$ be a DG-algebra and $M$ be a DG-$A$-Module. The coproduct of the homologies\footnote{In some contexts they are named \emph{cohomologies} due to the upper indices notation. We opt for the convention in \cite{yellow}, as no confusion may arise.} of $A$
 \[
    \Ho^\bullet(A)=\bigoplus_{i\in\mathbb{Z}}\Ho^i(A)
\]
is a graded ring and $\bigoplus_{i\in\mathbb{Z}}\Ho^i(M)$ is a graded $\Ho^\bullet(A)$-module.

We say that $A$ is \emph{bounded} if there exists $N \in \NN$ such that $\Ho^i(A) = 0$ whenever $|i| > N$, in other words, it has only a finite number of non zero homologies.
\end{cosa}

\begin{cosa}\textbf{The monoidal closed structure of DG-modules}.
The graded module of homomorphisms $\Hom_A^\bullet(M,N)$ that in degree $i \in \ZZ$ is formed by the graded homomorphisms of degree $i$ is endowed with the differential
 \[
d(\phi): =d_N\circ\phi - (-1)^j\phi\circ d_M,\ \forall \phi\in \Hom_A^j(M,N).
\]
It holds that
\[
   \Hom_{\DGAM(A)}(M,N):= \ker(d^0).
\]  
Analogously, the graded tensor product $M\otimes_A N$ is endowed with the differential
 \[
d(m\otimes n):=d_M(m)\otimes n+(-1)^i\cdot m\otimes d_N(n),\ \forall m\in M^i,\ n\in N^j
 \]
For more details, see \cite[Chapter 3]{Y1}. 
\end{cosa}

\begin{proposition}\label{tensorhomDG}
    Let $A$ and $B$ be weakly commutative DG-algebras, assume there is a homomorphism $A \to B$. Let $M \in\D(A)$ and $N, P \in\D(B)$. There is a natural isomorphism
    \[
   \Hom_A^\bullet(M, \Hom_B^\bullet(N, P)) \cong 
   \Hom_B^\bullet(M \otimes_{A} N, P)
    \]
\end{proposition}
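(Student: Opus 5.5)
The plan is to write down the classical tensor--hom adjunction map explicitly and check, degree by degree, that it respects the module structures and the differentials. The statement is at the level of DG-modules, not derived functors. First I fix the structures involved. Since $B$ is weakly commutative and receives $A\to B$, the DG-$B$-module $N$ is also a DG-$A$-module by restriction. Then $M\otimes_A N$ is a DG-$B$-module through the action on the second factor, with sign $b\cdot(m\otimes n)=(-1)^{|b||m|}m\otimes bn$. Weak commutativity makes this left action also a right action, so no bimodule issues appear. Likewise $\Hom_B^\bullet(N,P)$ is a DG-$B$-module, hence a DG-$A$-module, via $(b\phi)(n)=b\,\phi(n)$, again with the Koszul sign convention in \cite[Chapter 3]{Y1}.

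Next I define, for $f\in\Hom_A^i(M,\Hom_B^\bullet(N,P))$,
\[
\Phi(f)(m\otimes n)=f(m)(n),
\]
which has degree $i$. In the other direction, for $g\in\Hom_B^i(M\otimes_A N,P)$, I set $\Psi(g)(m)(n)=g(m\otimes n)$. Then I check the following, in order.
\begin{enumerate}
\item $\Phi(f)$ is well defined on $M\otimes_A N$, i.e.\ it is $A$-balanced: $f(ma)(n)=f(m)(an)$. This follows from $A$-linearity of $f$ and the definition of the $A$-action on $\Hom_B^\bullet(N,P)$.
\item $\Phi(f)$ is $B$-linear with the correct sign, using $B$-linearity of each $f(m)$.
\item $\Psi(g)(m)$ is $B$-linear, and $\Psi(g)$ is $A$-linear.
\item $\Phi$ and $\Psi$ are mutually inverse. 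This is immediate on generators $m\otimes n$.
\item Naturality in $M$, $N$, $P$ is formal.
\end{enumerate}

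The remaining point, and the one I expect to need the most care, is compatibility with the differentials, i.e.\ $\Phi(d f)=d\Phi(f)$. I will expand both sides on a homogeneous element $m\otimes n$ with $m\in M^j$, $n\in N^k$, using
\[
d(\phi)=d_P\circ\phi-(-1)^{i}\phi\circ d
\]
twice on the left-hand side and once on the right, together with $d(m\otimes n)=d m\otimes n+(-1)^j m\otimes dn$. The left side produces three terms,
\[
d_P(f(m)(n)),\quad (-1)^{i+j+1} f(m)(dn),\quad (-1)^{i+1} f(dm)(n),
\]
where the middle sign comes from $f(m)$ having degree $i+j$. The right side gives
\[
d_P f(m)(n)-(-1)^i\bigl(f(dm)(n)+(-1)^j f(m)(dn)\bigr).
\]
These agree term by term. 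The only genuine danger is a sign mismatch coming from the chosen convention for the $B$-action on $M\otimes_A N$ versus the $A$-action on $\Hom$. If a sign fails to match, I will adjust $\Phi$ by a factor $(-1)^{|m||n|}$ or use the convention of \cite{Y1} verbatim, where the isomorphism is stated.

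Since $\Phi$ is a degree-$0$ isomorphism of graded modules commuting with $d$, it is an isomorphism of complexes, and in fact of DG-modules.
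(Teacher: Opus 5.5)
Your proposal is correct and is essentially the paper's argument. The paper simply invokes the usual graded tensor--hom adjunction and notes that the differentials agree, which is exactly the verification you carry out; your signs do match term by term, so the fallback adjustment is unnecessary (minor point: with your convention $(b\phi)(n)=b\,\phi(n)$, the balancedness check in step (1) also uses the $B$-linearity of $f(m)$, not only the $A$-linearity of $f$).
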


\begin{proof} It follows from the usual graded tensor-hom adjunction. One checks that the involved differentials agree.
\end{proof}

\begin{cosa}\textbf{Truncations}.\cite[(1.4)]{Y2}\label{truncamientoint} 
For any $n\in\mathbb{Z}$ and $M$ en $\DGAM(A)$ define
\[
\tau^{\geq n}(M):=\ \cdots \to 0\to \cok(d^{n-1})\to M^{n+1}\to M^{n+2}\cdots 
\]
and
\[
\tau^{\leq n}(M):=\ \cdots\to M^{n-2}\to M^{n-1}\to \ker(d^n)\to 0\to\cdots 
\]

It holds that
\[
\Ho^i(\tau^{\geq n}M)=\begin{cases}
    \Ho^i(M)\ &\text{if }i\geq n \\
    0\ &\text{if } i<n
\end{cases}\qquad
\Ho^i(\tau^{\leq n}M)=\begin{cases}
    \Ho^i(M)\ &\text{if }i\leq n \\
    0\ &\text{if } i>n.
\end{cases}
\]
\end{cosa}

\begin{cosa}\textbf{Derived and homotopical categories of DG-modules}.\label{derivadaDGA}
For a DG-algebra $A$, the category $\DGAM(A)$ is analogous to the category of complexes of modules over a ring. Its derived category is constructed by inverting the \emph{quasi-iso\-mor\-phisms}, \ie morphisms that induce isomorphisms in homologies.

As in the classical case, it is most convenient to perform this localization in two steps. First we consider the homotopy category $\K(A)$ with the same objets as $\DGAM(A)$ and with morphisms defined by
\[
    \Hom_{\K(A)}(M,N)=\Ho^0(\Hom_{A}^\bullet(M,N)),
\]
\ie a morphism in $\K(A)$ is a class of homomorphism of DG-modules modulo those that are homotopic to zero. The categoy $\K(A)$ is no longer abelian but it has the strucutre of triangulated category, essentially because the cone construction carries over to this setting.

Finally, the derived category is defined as
\[
    \D(A):={\SS}^{-1}\K(A)
\]
where $\SS$ denotes the image of the class of quasi-isomorphisms in the homotopy category. It follows in an analogous fashion to the classic case that $\SS$ satisfies the axioms of a multiplicative system and that $\D(A)$ is also a triangulated category. The localization functor $Q\colon \K(A)\to\D(A)$ is a triangulated functor. Moreover, $\D(A)$ possesses arbitrary products and coproducts.

For full details, see \cite[Chapter 7]{Y1}.
\end{cosa}

\begin{theorem}\label{existenciaresolucionesDGA}
Let $A$ be a DG-algebra. The functor $Q\colon \K(A)\to\D(A)$ possesses left and right adjoints, in other words, every $M \in \K(A)$ possesses a K-projective and K-injective resolution.
\end{theorem}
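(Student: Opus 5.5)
We construct K-projective and K-injective resolutions directly and then deduce the adjoints formally. Recall that $M\in\K(A)$ is \emph{K-projective} if $\Hom_{\K(A)}(P,Z)=0$ for every acyclic $Z$, and \emph{K-injective} if $\Hom_{\K(A)}(Z,I)=0$ for every acyclic $Z$. Given such an object $P$, the map $\Hom_{\K(A)}(P,N)\to\Hom_{\D(A)}(P,N)$ is bijective for every $N$; this is the standard multiplicative-system argument, since a quasi-isomorphism $s\colon N'\to N$ has an acyclic cone. So if every $M$ admits a quasi-isomorphism $p_M\colon P_M\to M$ with $P_M$ K-projective, then $M\mapsto P_M$ extends to a functor $\D(A)\to\K(A)$ that is left adjoint to $Q$. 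Dually, quasi-isomorphisms $M\to I_M$ with $I_M$ K-injective give a right adjoint. The whole proof therefore reduces to the existence of these resolutions.

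\textbf{K-projective resolutions.} The free DG-module $A[n]$ is K-projective because $\Hom_{\K(A)}(A[n],Z)=\Ho^{-n}(Z)$, and this vanishes when $Z$ is acyclic. Sums of shifts of $A$ are then K-projective, as are objects with an exhaustive filtration $0=F_{-1}\subset F_0\subset F_1\subset\cdots$ whose subquotients are such sums and which split as graded modules (semi-free DG-modules). For the latter, the Milnor sequence for $\Hom(\varinjlim F_i,Z)$, together with the surjectivity of the restriction maps at the level of complexes (which comes from the graded splitting), gives the vanishing. The construction proceeds inductively as follows. Choose a free module $F_0$ on homogeneous cycles generating $\Ho^\bullet(M)$, so that $F_0\to M$ is surjective in homology. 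At each later step, adjoin free generators that kill the cycles in the kernel of the homology map, taking cones of maps from sums of shifts of $A$. The colimit $P\to M$ is then a quasi-isomorphism; this is the standard argument as in \cite[Chapter 10]{Y1}.

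\textbf{K-injective resolutions.} These are the main obstacle, because there is no direct dual of the free modules. The first approach is to use the coinduced modules $\Hom^\bullet_{\ZZ}(A,E)$ with $E$ a complex of injective abelian groups, say $\mathbb{Q}/\ZZ$ shifted. Proposition \ref{tensorhomDG}, applied to $\ZZ\to A$, identifies $\Hom_A^\bullet(Z,\Hom_\ZZ^\bullet(A,E))$ with $\Hom^\bullet_\ZZ(Z,E)$, which is acyclic when $Z$ is acyclic and $E$ is a bounded complex of injectives. This produces enough K-injectives into which every $M$ embeds with homology-injective maps. One then builds a tower $M\to I_0\to I_1\to\cdots$ of cones and takes the homotopy limit, a product-totalization, to get a semi-injective object, dualizing the semi-free argument with the Milnor $\varprojlim^1$ sequence. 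An alternative route avoids the dual construction: $\K(A)$ is compactly generated by $A$, and the K-injectives are exactly the right orthogonal of the acyclics. Since the acyclics form the localizing subcategory of objects $Z$ with $\Hom(A[n],Z)=0$, Brown representability (Neeman) shows that the inclusion of the acyclics has a right adjoint. Its cone triangle $Z\to M\to I$ gives the desired $M\to I$. I would present this second route if the tower argument gets messy, checking only that $\K(A)$ has coproducts and that the acyclics form a localizing subcategory generated by a set.
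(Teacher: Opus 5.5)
Your first route is correct and is essentially what the paper relies on: its proof is just the citation \cite[\S 11.4 and \S 11.6]{Y1}, which builds semi-free resolutions for the K-projective side and a dual limit construction from coinduced modules for the K-injective side. The formal passage from resolutions to adjoints is also fine. Two corrections to your sketch.
\begin{itemize}
\item The isomorphism $\Hom_A^\bullet(Z,\Hom_\ZZ^\bullet(A,E))\cong\Hom_\ZZ^\bullet(Z,E)$ is the coinduction adjunction. It is not Proposition \ref{tensorhomDG}: applied to $\ZZ\to A$, that proposition gives the other adjunction, and it also assumes weak commutativity, which this theorem does not.
\item The K-injective step must be an inverse tower $\cdots\to J_1\to J_0$. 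Start from $M\to J_0$, homology-injective into a product of coinduced modules. Let $J_{n+1}$ be the fibre of $J_n\to\mathrm{cone}(M\to J_n)\to I_{n+1}$, where $I_{n+1}$ is again such a product and the second map is homology-injective. Then $M\to J_n$ lifts to $J_{n+1}$ on the nose.
\end{itemize}
With this set-up the argument closes in two steps. The transition maps are graded-split surjections, so $\Hom^\bullet_A(Z,\varprojlim J_n)$ is acyclic by the Milnor sequence. The image of $\Ho(J_{n+1})\to\Ho(J_n)$ equals the image of $\Ho(M)$, so the system $\Ho(J_n)$ is pro-isomorphic to the constant system $\Ho(M)$, and $M\to\varprojlim J_n$ is a quasi-isomorphism. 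A ``product-totalization of $M\to I_0\to I_1\to\cdots$'' is not the right description: the iterated-cone maps $I_k\to I_{k+1}$ compose to zero only up to homotopy, so there is no double complex to totalize.

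Your alternative route, however, has a genuine gap. $\K(A)$ is not compactly generated by $A$. As you yourself note, $\{A[n]\}^\perp$ in $\K(A)$ is exactly the class of acyclic DG-modules, and these are nonzero in $\K(A)$ unless contractible (e.g.\ $0\to\ZZ\xrightarrow{2}\ZZ\to\ZZ/2\to 0$ over $A=\ZZ$). More to the point, getting a right adjoint to the inclusion of the acyclics $\mathcal{Z}$ by Brown representability means representing $\Hom_{\K(A)}(-,M)$ restricted to $\mathcal{Z}$. That requires Brown representability for $\mathcal{Z}$ itself, hence that $\mathcal{Z}$ be compactly or well generated. Being ``localizing and generated by a set'' is not a hypothesis of Neeman's theorem, and you do not even produce a generating set.

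What the compact object $A$ does give formally is the projective half. $\loc_{\K(A)}(A)$ is compactly generated by $A$, so its inclusion into $\K(A)$ has a right adjoint by \cite[1.7]{Ntty}, as in Proposition \ref{compactamentegenerada}. The resulting triangle $P\to M\to Z$ has $P$ K-projective and $Z\in\{A[n]\}^\perp$, i.e.\ acyclic. The injective half is not formal in this way, which is why the cited source constructs the dual resolutions by hand. So drop the second route and write out the tower.
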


\begin{proof}
 We point the readers to \cite[\S 11.4 and \S 11.6]{Y1}. 
\end{proof}

\begin{remark}
 A standard consequence of the previous Theorem is that every functor defined on $\K(A)$ admits left and right derived functors \cite[\S 8.3]{Y1}. A nice discussion of derived functors is is \cite[Chapter 2]{yellow}.
\end{remark}

In this vein we may extend Proposition \ref{tensorhomDG} to the derived setting.

\begin{theorem}\label{tensorhomDer}
    Let $A$ and $B$ be weakly commutative DG-algebras, assume there is a homomorphism $A \to B$. Let $M \in\D(A)$ and $N, P \in\D(B)$. There is a natural isomorphism
    \[
   \rhom_A^\bullet(M, \rhom_B^\bullet(N, P)) \cong 
   \rhom_B^\bullet(M \otimes_{A}^{\LL} N, P)
    \]
\end{theorem}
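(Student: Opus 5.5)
The plan is to reduce the statement to the underived isomorphism of Proposition \ref{tensorhomDG} by choosing suitable resolutions, using Theorem \ref{existenciaresolucionesDGA}. First we fix how each derived functor is computed. We compute $\rhom_B^\bullet(-,P)$ by replacing $P$ with a K-injective resolution $P\to I$ over $B$. We compute $M\otimes^{\LL}_A N$ by replacing $M$ with a K-projective resolution $F\to M$ over $A$, so that $M\otimes^{\LL}_A N \cong F\otimes_A N$. Finally, $\rhom_A^\bullet(M,-)$ can be computed using $F$ in the first variable. The right-hand side then becomes $\Hom_B^\bullet(F\otimes_A N, I)$, and the left-hand side becomes $\Hom_A^\bullet(F,\Hom_B^\bullet(N,I))$. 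Here we use that $\rhom_B^\bullet(N,P)\cong \Hom_B^\bullet(N,I)$, regarded as an $A$-module by restriction along $A\to B$.

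Proposition \ref{tensorhomDG} then gives a natural isomorphism of complexes between these two expressions, and hence an isomorphism in $\D(A)$, or in $\D(\ZZ)$ after passing to global Hom. Independence from the choice of resolutions follows in the standard way. Different resolutions are homotopy equivalent, and the isomorphism of Proposition \ref{tensorhomDG} is natural in all three variables, so the induced isomorphisms are compatible. Naturality in $M,N,P$ in the derived category follows by the same argument, since morphisms in $\D$ lift to morphisms between K-projective or K-injective resolutions.

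The main obstacle is justifying that these particular representatives really compute the derived functors. This requires two facts.
\begin{enumerate}
\item The restriction of $\Hom_B^\bullet(N,I)$ along $A\to B$ needs no further resolution. This is automatic here, because the first variable $F$ is K-projective and so $\Hom_A^\bullet(F,-)$ already preserves quasi-isomorphisms.
\item The complex $\Hom_B^\bullet(F\otimes_A N, I)$ computes $\rhom_B^\bullet(F\otimes_A N,P)$. This holds because $I$ is K-injective over $B$, so $\Hom_B^\bullet(-,I)$ preserves quasi-isomorphisms in the first variable. Hence it computes the derived functor on any representative of $M\otimes^{\LL}_A N$.
\end{enumerate}
A dual argument would also work: one checks that $\Hom_B^\bullet(N,I)$ is K-injective over $A$. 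This uses the adjunction of Proposition \ref{tensorhomDG} together with the fact that $-\otimes_A N$ preserves acyclicity on K-projectives. It is not needed, however, since resolving $M$ suffices.

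Weak commutativity enters only so that the Hom and tensor objects are again DG-modules over the algebras, which we take for granted from the underived proposition. The remaining sign checks are part of Proposition \ref{tensorhomDG}.
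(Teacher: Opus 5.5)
Your proof is correct, but it takes a different route from the paper's, and the difference matters. The paper leaves $M$ unresolved. It takes a K-projective $Q\to N$ over $B$ and a K-injective $P\to I$, asserts that $\Hom_B^\bullet(Q,I)$ is K-injective, and reads off $\Hom_A^\bullet(M,\Hom_B^\bullet(Q,I))\cong\Hom_B^\bullet(M\otimes_A Q,I)$ as the two derived objects.

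Both identifications are justified when $Q$ is K-flat as an $A$-module. In that case $\Hom_A^\bullet(X,\Hom_B^\bullet(Q,I))\cong\Hom_B^\bullet(X\otimes_A Q,I)$ is acyclic for acyclic $X$, and $M\otimes_A Q$ computes $M\otimes^{\LL}_A N$. This holds, for instance, when $B$ is K-flat over $A$, as for localizations. A K-projective $B$-module need not be K-flat over $A$, however. Take $k[x]\to k$ and $M=N=P=Q=I=k$. The paper's middle terms equal $\Hom_{k[x]}(k,k)=k$, whereas both sides of the theorem are $k\oplus k[-1]$.

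Resolving $M$ by a K-projective (hence K-flat) $A$-module $F$, as you do, avoids this problem entirely:
\begin{itemize}
\item $F\otimes_A N$ computes $M\otimes^{\LL}_A N$ for arbitrary $N$;
\item $\Hom_A^\bullet(F,-)$ needs no K-injectivity of its argument over $A$;
\item $I$ only has to be K-injective over $B$.
\end{itemize}
So your version is valid for any homomorphism $A\to B$, while the paper's shorter argument needs a flatness hypothesis it does not state.

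For the same reason, drop your closing remark about a ``dual argument''. $\Hom_B^\bullet(N,I)$ is not K-injective over $A$ in general: in the example above it is $k$, which is not K-injective over $k[x]$. Knowing that $-\otimes_A N$ behaves well on K-projectives cannot prove it either. K-injectivity has to be tested against all acyclic $A$-modules, not just the acyclic K-projective ones, which are contractible. Your main argument never uses this remark, so it is unaffected.
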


\begin{proof} 
    Take a K-projective resolution $Q \to N$ in $\DGAM(B)$ and a K-injective resolution $P \to I$. Notice that $\Hom_B^\bullet(Q, I))$ is K-injective and we have the following chain of isomorphisms
\begin{align*}
\rhom_A^\bullet(M, \rhom_B^\bullet(N, P)) \cong & \Hom_A^\bullet(M, \Hom_B^\bullet(Q, I)) \\
\cong \Hom^\bullet_B(M \otimes_A Q, I) \cong & \rhom_B^\bullet(M \otimes_{A}^{\LL} N, P)
\end{align*}
where the middle isomorphism follows from Proposition \ref{tensorhomDG} and the first and last by definition of derived functor.
\end{proof}

\begin{remark*}
 Full details are given in \cite[(12.10.12)]{Y1}.
\end{remark*}

Let $f\colon A \to B$ a homorphism of DG-algebras. Let us describe some basic functors between their corresponding derived categories. We will use a converse convention on super/subscripts with respect to variances to adhere the geometric convention.
The most basic functor is the forgetful functor

\[
f_* \colon \D(B) \lto \D(A)
\]

The functor $f_*$ is exact and possesses left and right adjoints defined by
\begin{align*}
    f^{\times}\colon \D(A)&\lto\D(B)& \LL f^*\colon\D(A)&\lto\D(B)\\  
    M&\longmapsto \rhom^\bullet_A(B,M) & M&\longmapsto M\otimes_A^{\LL}   B
\end{align*}

The assignment $(-)_*$ is obviously pseudo-functorial (see \cite[(3.6.5)]{yellow}), therefore so are $(-)^*$ and $(-)^\times$.

To be concrete, let us state the following.
\begin{proposition}\label{adjolvidosuperpor}
    Let $f\colon A\to B$ be a morphism in $\DGA$, for $M\in\D(A),\ N\in\D(B)$ there are natural isomorphisms:
    \begin{itemize}
        \item[(1)] $\Hom_{\D(A)}(M,f_*(N))\liso\Hom_{\D(B)}(\LL f^*M,N)$
        \item[(2)] $\Hom_{\D(A)}( f_*(N),M)\liso\Hom_{\D(B)}(N,f^\times M)$
    \end{itemize}
\end{proposition}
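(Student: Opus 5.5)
Both adjunctions will be reduced to the derived tensor--hom isomorphism of Theorem \ref{tensorhomDer}, followed by taking $\Ho^0$. The key fact is that for DG-modules $X,Y$ over a DG-algebra $C$ one has $\Hom_{\D(C)}(X,Y)\cong \Ho^0(\rhom_C^\bullet(X,Y))$. To see this, use Theorem \ref{existenciaresolucionesDGA} to replace $Y$ by a K-injective resolution $Y\to I$ (or $X$ by a K-projective one). Then $\Hom_{\D(C)}(X,Y)\cong\Hom_{\K(C)}(X,I)=\Ho^0(\Hom_C^\bullet(X,I))$, as in \ref{derivadaDGA}.

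For (1), take $A\to B$ as in Theorem \ref{tensorhomDer}. Put $M\in\D(A)$ in the first slot, $B$ itself in the middle slot, and $N\in\D(B)$ in the last slot. This gives
\[
\rhom_A^\bullet(M,\rhom_B^\bullet(B,N))\cong \rhom_B^\bullet(M\otimes_A^{\LL}B,N)=\rhom_B^\bullet(\LL f^*M,N).
\]
Since $B$ is K-projective over itself, $\rhom_B^\bullet(B,N)\cong\Hom_B^\bullet(B,N)\cong N$, with the $A$-structure obtained by restriction along $f$, i.e.\ it is $f_*N$. Taking $\Ho^0$ of both sides gives (1). Naturality is inherited from the naturality in Theorem \ref{tensorhomDer}.

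For (2), work directly with resolutions rather than citing the theorem, since the roles of $A$ and $B$ are reversed. Take a K-injective resolution $M\to I$ in $\DGAM(A)$. Then $f^\times M=\Hom_A^\bullet(B,I)$, and this $B$-module is K-injective over $B$. Indeed, for an acyclic $B$-module $Z$, the underived version of Proposition \ref{tensorhomDG} gives $\Hom_B^\bullet(Z,\Hom_A^\bullet(B,I))\cong\Hom_A^\bullet(Z\otimes_B B,I)=\Hom_A^\bullet(f_*Z,I)$, which is acyclic because $f_*Z$ is still acyclic. Now take $N$ K-projective over $B$, again by Theorem \ref{existenciaresolucionesDGA}. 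We have the graded adjunction isomorphism
\[
\Hom_B^\bullet(N,\Hom_A^\bullet(B,I))\cong\Hom_A^\bullet(N\otimes_B B,I)=\Hom_A^\bullet(f_*N,I).
\]
It is compatible with the differentials by the same check as in Proposition \ref{tensorhomDG}. Applying $\Ho^0$, the left side is $\Hom_{\D(B)}(N,f^\times M)$, since $\Hom_A^\bullet(B,I)$ is K-injective. The right side is $\Hom_{\D(A)}(f_*N,M)$, since $I$ is K-injective over $A$.

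The main obstacle is the bookkeeping of module structures rather than any conceptual step. In (2) one must make sure that the $B$-module structure on $\Hom_A^\bullet(B,I)$, through the source $B$, matches the one used in the adjunction. One must also check that the isomorphisms do not depend on the chosen resolutions, which gives naturality. Both are handled as in the classical case of rings, with the usual Koszul signs.
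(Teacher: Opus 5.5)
Your argument is correct. It differs from the paper mainly in being self-contained: the paper gives no argument of its own and refers to \cite[12.6.6 (3)]{Y1} and \cite[12.6.12 (3)]{Y1}, whereas you derive both adjunctions from the resolutions of Theorem \ref{existenciaresolucionesDGA} and the identification $\Hom_{\D(C)}(X,Y)\cong\Ho^0(\rhom^\bullet_C(X,Y))$.

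\textbf{Part (2).} This is the standard coinduction argument written out, and its key step is right: $\Hom_A^\bullet(B,I)$ is K-injective over $B$ because $\Hom_B^\bullet(Z,\Hom_A^\bullet(B,I))\cong\Hom_A^\bullet(f_*Z,I)$. Two small remarks:
\begin{itemize}
\item Strictly speaking, this isomorphism is the graded adjunction with the roles of $A$ and $B$ swapped, not literally Proposition \ref{tensorhomDG}, although the sign check is the same.
\item Replacing $N$ by a K-projective module is harmless but unnecessary, since the second argument is already K-injective on both sides.
\end{itemize}

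\textbf{Part (1).} Here you specialize Theorem \ref{tensorhomDer} to the middle variable $B$. This is neat, but it imports the weak commutativity hypothesis of that theorem, while the Proposition is stated for an arbitrary morphism in $\DGA$. Nothing in the paper is affected, because the Proposition is only applied to commutative DG-algebras, and the formula $\LL f^*M=M\otimes^{\LL}_A B$ is written with commutativity in mind.

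If you want the full statement, and want (1) not to depend on Theorem \ref{tensorhomDer}, you can mirror (2):
\begin{itemize}
\item Take a K-projective resolution $P\to M$ over $A$. Then $B\otimes_A P$ represents $\LL f^*M$.
\item $B\otimes_A P$ is K-projective over $B$, because $\Hom_B^\bullet(B\otimes_A P,Z)\cong\Hom_A^\bullet(P,f_*Z)$ is acyclic whenever $Z$ is.
\item Applying $\Ho^0$ to this same isomorphism with $Z=N$ gives (1).
\end{itemize}
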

 \begin{proof}
    See \cite[12.6.6 (3)]{Y1} and \cite[12.6.12 (3)]{Y1}.
\end{proof}
 
\begin{cosa}\textbf{Projection  isomorphism.}\label{forproyeccióndga}
    For a homomorphism $f\colon A\to B$ in $\DGA^{\leq 0}$, and $M\in \D(A)$, $N\in\D(B)$, the canonical map
     \[
 f_*N\otimes^{\LL}_A M \liso  f_*(N\otimes^{\LL}_B \LL f^*M)     
     \]
is an isomorphism called the \emph{projection formula}.
\end{cosa}

The isomorphisms of Proposition \ref{adjolvidosuperpor} are upgraded to internal adjunctions. Specifically,
 
 \begin{itemize}
    \item[(1)]$\rhom^\bullet_A(M,f_*N) \liso f_*\rhom^\bullet_B(\LL f^*M,N)$
    \item[(2)]$\rhom^\bullet_A( f_*N,M) \liso f_*\rhom^\bullet_B(N,f^\times M)$.
 \end{itemize}
 
The first assertion is proved using Proposition \ref{adjolvidosuperpor}(1) and the monoidal character of $f^*$ and the second combining Proposition \ref{adjolvidosuperpor}(2) and  \ref{forproyeccióndga}. 

\begin{cosa}\textbf{Base change isomorphism}.\label{cambiobase_co}
Given a commutative square of DG-algebras
\begin{equation*}
\begin{tikzpicture}[baseline=(current  bounding  box.center)]
\matrix(m)[matrix of math nodes, row sep=2.6em, column sep=2.8em,
text height=1.5ex, text depth=0.25ex]{
  A & B \\
  C & B\otimes_AC\\
  };
\path[->,font=\scriptsize,>=angle 90]
(m-1-1) edge node[auto] {$f$} (m-1-2)
(m-2-1) edge node[auto] {$g$} (m-2-2)
(m-1-1) edge node[left] {$u$} (m-2-1)
(m-1-2) edge node[auto] {$v$} (m-2-2);
\end{tikzpicture}
\end{equation*}
the natural transformation
\begin{equation}\label{cambiobase*}
 \LL g^* \LL u^* f_* \cong \LL v^* \LL f^* f_*\lto \LL v^*,
\end{equation}
induced by the adjunction unit $\LL f^*\dashv f_*,$ yields a natural transformation 
\begin{equation}
\theta\colon\LL u^* f_*\lto g_*\LL v^*   \label{cambiobase*2},
\end{equation}
corresponding to the previous map through the adjunction $\LL g^*\dashv g_*$.

If $u\colon A\to C$ is flat, then the natural transformation (\ref{cambiobase*2}) is an isomorphism. Indeed, in this case, $v$ is also a flat morphism and the isomorphism (\ref{cambiobase*}),
\[
u^*f_*\cong g_*v^*,
\]
is given for each $M\in\D(B)$ by the following identifications
\[
g_*v^*M=g_*(M\otimes_B(B\otimes_A C))\cong f_*M\otimes_A C=u^*f_*M.
\]
\end{cosa}

\begin{cosa}
 We will mostly consider from now on \emph{non positive} DG-algebras, meaning that, as complexes of abelian groups their homologies are concentrated in degrees less or equal than $0$. These DG-algebras are called connective in the topological literature. We will denote by $\DGA^{\leq 0}$ the full subcategory of $\DGA$ of non-positive DG-algebras.

For $A \in \DGA^{\leq 0}$, there is a canonical epimorphism
\[
r\colon A \lto \Ho^0(A),
\]
together with the associated forgetful functor $r_* \colon \D(\Ho^0(A)) \to \D(A)$ and
 \begin{align*}
    r^{\times}\colon \D(\Ho^0(A))&\lto\D(A)& \LL r^*\colon\D(\Ho^0(A))&\lto\D(A)\\  
    M&\longmapsto \rhom^\bullet_A(\Ho^0(A),M) & M&\longmapsto M\otimes_A^{\LL}  \Ho^0(A)
\end{align*}
its adjoints.
\end{cosa}

\section{Supports and residue fields}\label{supresfi}

\begin{cosa}
Let $A$ be a commutative non positive DG-algebra $A$ and $\ip\in\spec(\Ho^0(A))$ a prime ideal. By $j_{\ip}\colon\Ho^0(A)\to k(\ip)$ we will denote the canonical homomorphism form $\Ho^0(A)$ to the residue field $k(\ip)$. We will denote by $i_{\ip}\colon A \to k(\ip)$ the composition $j_{\ip} \circ r$. This way there is a canonical diagram of homomorphisms associated to $\ip\in\spec(\Ho^0(A))$.
\end{cosa}

\begin{equation}\label{goodnotat}
\begin{tikzpicture}[baseline=(current  bounding  box.center)]
\matrix(m)[matrix of math nodes, row sep=2.6em, column sep=2.8em,
text height=1.5ex, text depth=0.25ex]{
  A & \Ho^0(A) &  k(\ip)\\
  };
\path[->,font=\scriptsize,>=angle 90]
(m-1-1) edge node[auto] {$r$} (m-1-2)
(m-1-2) edge node[auto] {$j_{\ip}$} (m-1-3)
(m-1-1) edge [bend left] node[above] {$i_{\ip}$} (m-1-3);
\end{tikzpicture}
\end{equation}

The next result gives two rules of orthogonality on residue fields in the context of differential graded algebras.  
\begin{lema}\label{tensorhomvanishesdga}
Let $\ip,\iq\in\spec(\Ho^0(A))$. The following are equivalent:
\begin{itemize}
    \item[(i)] $\ip\neq \iq,$
    \item[(ii)] $i_{\ip*}k(\ip)\otimes_A^{\LL}i_{\iq*}k(\iq)=0,$  
    \item[(iii)] $\rhom^\bullet_A(i_{\ip*}k(\ip),i_{\iq*}k(\iq))=0.$
\end{itemize}
\end{lema}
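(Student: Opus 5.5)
The strategy is to reduce both computations to the ordinary ring $R:=\Ho^0(A)$ via the projection formula \ref{forproyeccióndga} and the internal adjunction (2) for $r$, and then to $k(\ip)$ itself. Write $K_\ip:=j_{\ip*}k(\ip)\in\D(R)$, so that $i_{\ip*}k(\ip)=r_*K_\ip$ by pseudo-functoriality.

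\emph{Tensor products.} By \ref{forproyeccióndga} applied to $r$,
\[
r_*K_\ip\otimes^{\LL}_A r_*K_\iq\cong r_*\bigl(K_\ip\otimes^{\LL}_R \LL r^* r_*K_\iq\bigr).
\]
The complication is that $\LL r^*r_*K_\iq=K_\iq\otimes^{\LL}_A R$ is not $K_\iq$. It is, however, an $R$-module complex obtained by base change from $k(\iq)$. More precisely, applying the projection formula again, this time to $j_\iq$, gives
\[
K_\ip\otimes^{\LL}_R j_{\iq*}(-)\cong j_{\iq*}\bigl(\LL j_\iq^*K_\ip\otimes^{\LL}_{k(\iq)}-\bigr).
\]
So everything reduces to $\LL j_\iq^*K_\ip=k(\ip)\otimes^{\LL}_R k(\iq)$, where $\LL r^*r_*K_\iq$ is regarded as a complex of $k(\iq)$-modules; it is a DG-module over $k(\iq)\otimes_A^{\LL}R$, and hence over $k(\iq)$ via the second factor. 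To justify this last point I would instead reorganize the computation as $r_*K_\ip\otimes^{\LL}_A i_{\iq*}k(\iq)\cong i_{\iq*}(\LL i_\iq^*r_*K_\ip)$, which is the projection formula for $i_\iq$. This exhibits the result as the forgetful image of a $k(\iq)$-object and, by symmetry, also of a $k(\ip)$-object. Its homology is then simultaneously a graded $k(\ip)$-vector space and a graded $k(\iq)$-vector space, compatibly with the $R$-action (the $R$-action on homology of a tensor product over $A$ via either factor agrees, since $A$ acts on homology through $R$).

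\emph{Vanishing when $\ip\neq\iq$.} Without loss of generality pick $s\in\ip\setminus\iq$ (or symmetrically). Multiplication by $s$ on the homology is zero through the $k(\ip)$-structure and invertible through the $k(\iq)$-structure, so the homology vanishes. This gives (i)$\Rightarrow$(ii). The same argument applies to $\rhom^\bullet_A(i_{\ip*}k(\ip),i_{\iq*}k(\iq))$: by the internal adjunction (2) for $i_\iq$ it equals $i_{\iq*}\rhom_{k(\iq)}(\ldots)$, and its homology carries commuting $R$-actions through either argument, which agree up to sign. Hence (i)$\Rightarrow$(iii). The main obstacle is making rigorous that the two $R$-actions on homology coincide; I would verify this at the level of $\Ho^0(A)$ acting on $\Ho^\bullet$ of a $\otimes$/$\Hom$ over the weakly commutative $A$, where it follows from the standard fact that the action of $a\in Z^0(A)$ through either factor is homotopic up to sign.

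\emph{Nonvanishing when $\ip=\iq$.} Set $k=k(\ip)$. I would show that $\Ho^0$ of $k\otimes^{\LL}_A k$ is $k\otimes_R k=k\ne0$. Since both modules are concentrated in degree $0$ and $A$ is non positive, $\Ho^0(M\otimes^{\LL}_A N)=\Ho^0(M)\otimes_{\Ho^0 A}\Ho^0(N)$ for $M,N$ with homology in degrees $\le 0$ (a standard truncation argument using \ref{truncamientoint}). Here $k\otimes_R k=k_\ip\otimes_{R_\ip}k_\ip=k$. Dually, $\Ho^0\rhom_A(k,k)=\Hom_R(k,k)=\Hom_{\D(A)}(r_*k,r_*k)$, which contains the identity. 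Alternatively, by Proposition \ref{adjolvidosuperpor}, $\Hom_{\D(A)}(i_*k,i_*k)$ contains the identity of the nonzero object $i_*k$. Thus (ii)$\Rightarrow$(i) and (iii)$\Rightarrow$(i).
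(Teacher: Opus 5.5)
Your proof is correct, but it is organized differently from the paper's. The paper proves the cycle (i)$\Rightarrow$(ii)$\Rightarrow$(iii)$\Rightarrow$(i).

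\begin{itemize}
\item For (i)$\Rightarrow$(ii) it lifts $s\in\iq\setminus\ip$ to $t\in A^0$ and passes to $\tilde u\colon A\to A_t$. Flat base change then gives $\tilde u_*\tilde u^*i_{\ip*}k(\ip)\cong i_{\ip*}k(\ip)$ and $\tilde u^*i_{\iq*}k(\iq)=0$, and the projection formula kills the tensor product.
\item (ii)$\Rightarrow$(iii) is purely formal: $\rhom^\bullet_A(M,K)$ is a retract of $\rhom^\bullet_A(M\otimes^{\LL}_A K,K\otimes^{\LL}_A K)$ for $K=i_{\iq*}k(\iq)$.
\item (iii)$\Rightarrow$(i) rewrites $\rhom^\bullet_A(i_{\ip*}k(\ip),i_{\ip*}k(\ip))$ by adjunction.
\end{itemize}

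You instead treat (ii) and (iii) in parallel. Each object is, up to isomorphism in $\D(A)$, pushed forward both from $k(\ip)$ and from $k(\iq)$, so a separating element acts on its homology both as $0$ and as a unit. This avoids the DG localization $A_t$ and DG flat base change, which the paper takes for granted. The paper's route, in exchange, gets (iii) from (ii) for free.

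The obstacle you flag is not a real one. $\Ho^0(A)=\Hom_{\D(A)}(A,A)$ acts on $\Ho^n(X)=\Hom_{\D(A)}(A[-n],X)$ by precomposition, naturally in $X$. Hence the isomorphisms $X\cong i_{\iq*}Y\cong i_{\ip*}Y'$ in $\D(A)$ are automatically $\Ho^0(A)$-linear on homology. There is also a small labelling slip: writing $\rhom^\bullet_A(i_{\ip*}k(\ip),i_{\iq*}k(\iq))$ as $i_{\iq*}$ of a $k(\iq)$-complex uses internal adjunction (1) for $i_{\iq}$; adjunction (2), applied to $i_{\ip}$, gives the $i_{\ip*}$-description.

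For the converse, your identity-morphism argument for $\rhom$ is simpler than the paper's, which relies on the unexplained claim $\LL i_{\ip}^*i_{\ip*}k(\ip)\neq0$. The truncation formula for $\Ho^0$ is more than you need for the tensor product. The triangle identity $i_{\ip*}\varepsilon\circ\eta_{i_{\ip*}}=1$ exhibits $i_{\ip*}k(\ip)$ as a retract of $i_{\ip*}\LL i_{\ip}^*i_{\ip*}k(\ip)\cong i_{\ip*}k(\ip)\otimes^{\LL}_A i_{\ip*}k(\ip)$, which is therefore nonzero. That same observation is also what justifies the paper's unexplained claim.
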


\begin{proof}
In (i) let us assume that there is an element $s\in\iq$ such that $s\notin\ip.$ Let $t\in A^0$ such that $r(t)=s.$ Consider the commutative diagram
\begin{equation*}
\begin{tikzpicture}[baseline=(current  bounding  box.center)]
\matrix(m)[matrix of math nodes, row sep=2.6em, column sep=2.8em,
text height=1.5ex, text depth=0.25ex]{
  A   & \Ho^0(A)   &  k(\ip)\\
  A_t & \Ho^0(A)_s &  k(\ip)\\
  };
\path[->,font=\scriptsize,>=angle 90]
(m-1-1) edge node[auto] {$r$} (m-1-2)
        edge node[auto] {$\tilde{u}$} (m-2-1)
(m-1-2) edge node[auto] {$j_{\ip}$} (m-1-3)
        edge node[auto] {$u$} (m-2-2)
(m-1-1) edge [bend left] node[above] {$i_{\ip}$} (m-1-3)
(m-2-1) edge node[auto] {$\tilde{r}$} (m-2-2)
(m-2-2) edge node[auto] {$j_{{\ip}_s}$} (m-2-3);
\draw[double distance = 1.5pt](m-1-3)--(m-2-3);
\end{tikzpicture}
\end{equation*}
Where $\tilde{u}$, $\tilde{r}$, $u$ and $j_{{\ip}_s}$ are the canonical localization homomorphisms. Both squares are cocartesian with flat vertical morphisms. So we have the following chain of isomorphisms
\begin{align*}
\tilde{u}_*\tilde{u}^* i_{\ip*}k(\ip) & \cong\tilde{u}_*\tilde{u}^* r_* j_{\ip *}k(\ip) \\
    & \cong\tilde{u}_*\tilde{r}_*u^*j_{\ip *}k(\ip) \\   
    & \cong\tilde{u}_*\tilde{r}_*j_{{\ip}_s*}k(\ip) \cong i_{\ip*}k(\ip)
\end{align*}
the first and the last isomorphisms hold by pseudofunctoriality of $(-)_*$,  and the rest by base change. We also have
\[
\tilde{u}^*i_{\iq *}k(\iq)=\tilde{u}^* r_*j_{\iq *}k(\iq)\cong\tilde{r}_* u^* j_{\iq *}k(\iq)=0,
\]
where the last equality $u^*j_{\iq *}k(\iq) = k(\iq)_s = 0$ holds because $s\in\iq$.

Finally, using that $\tilde{u}_*\tilde{u}^*i_{\ip*}k(\ip)\cong i_{\ip*}k(\ip)$, by the projection formula, we obtain 
\begin{align*}
     i_{\ip*}k(\ip)\otimes_A^{\LL} i_{\iq*}k(\iq)&\cong\tilde{u}_*\tilde{u}^* i_{\ip*}k(\ip)\otimes_A^{\LL} i_{\iq*}k(\iq)\\
     &\cong \tilde{u}_*(\tilde{u}^* i_{\ip*}k(\ip)\otimes_A^{\LL}\tilde{u}^*i_{\iq*}k(\iq))=0.
\end{align*}

Let us deal now with the implication (ii)$\imp$(iii). For $M\in\D(A)$  there is a canonical homomorphism
\[
\rhom^\bullet_A(M, i_{\iq*}k(\iq))\stackrel{\alpha}\lto\rhom^\bullet_A(M\otimes_A^{\LL} i_{\iq*}k(\iq), i_{\iq*}k(\iq)\otimes_A^{\LL}i_{\iq*}k(\iq))
\]
determined applying the functor $(-)\otimes_A^{\LL}i_{\iq*}k(\iq)$. Let $\beta$ be the homomorphism 
\[
\rhom^\bullet_A(M\otimes_A^{\LL} i_{\iq*}k(\iq), i_{\iq*}k(\iq)\otimes_A^{\LL}i_{\iq*}k(\iq))\stackrel{\beta}\lto\rhom^\bullet_A(M, i_{\iq*}k(\iq)),
\]
defined through the canonical homomorphisms $i_{\iq*}k(\iq)\otimes_A^{\LL}i_{\iq*}k(\iq)\to i_{\iq*}k(\iq)$ and $M \cong M\otimes_A^{\LL}A\to M\otimes_A^{\LL}i_{\iq*}k(\iq)$. The composition $\beta\circ\alpha$ is obviously the identity. Taking $M=i_{\ip*}k(\ip)$ and applying (ii) we conclude that  $\rhom_A(i_{\ip*}k(\ip),i_{\iq*}k(\iq))=0.$

Finally, (iii)$\Rightarrow$(i) follows from the fact that
\[
\rhom^\bullet_A(i_{\ip*}k(\ip),i_{\ip*}k(\ip))=i_{\ip*}\rhom^\bullet_{k(\ip)}(\LL i_{\ip}^*i_{\ip*}k(\ip),k(\ip))\neq0,
\]
because $\LL i_{\ip}^*i_{\ip *}k(\ip)\neq 0$.
\end{proof}

\begin{definition}\label{defco_sup}
    Let $M\in\D(A)$, the set
\[
\ssupp_A(M):=\{\ip\in\spec(\Ho^0(A))\ |\ i_{\ip*}k(\ip)\otimes_{A}^{\LL} M\neq 0\},
\]
is called \emph{support of $M$}. Similarly, the set
\[
\cosupp_A(M):=\{\ip\in\spec(\Ho^0(A))\ |\ \rhom^\bullet_A(i_{\ip*}k(\ip), M)\neq 0\}.
\]
is called \emph{cosupport of $M$}.
\end{definition}

Let us compute the support and cosupport in the simplest case.

\begin{lema}\label{soportekp}
    Let $\ip\in\spec(\Ho^0(A))$, it holds that
    \[
  \ssupp_A(i_{\ip*}k(\ip)) =   \cosupp_A(i_{\ip*}k(\ip)) = \{\ip\}
    \]
\end{lema}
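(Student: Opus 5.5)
This is essentially a direct rereading of Lemma \ref{tensorhomvanishesdga} in terms of Definition \ref{defco_sup}, so we plan to deduce it from there with no new input. By definition, $\iq\in\ssupp_A(i_{\ip*}k(\ip))$ if and only if $i_{\iq*}k(\iq)\otimes^{\LL}_A i_{\ip*}k(\ip)\neq 0$. Similarly, $\iq\in\cosupp_A(i_{\ip*}k(\ip))$ if and only if $\rhom^\bullet_A(i_{\iq*}k(\iq),i_{\ip*}k(\ip))\neq 0$. Lemma \ref{tensorhomvanishesdga} is stated for an arbitrary ordered pair of primes, and we apply it to the pair $(\iq,\ip)$. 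The equivalence (i)$\Leftrightarrow$(ii) gives that the tensor product vanishes exactly when $\iq\neq\ip$. The equivalence (i)$\Leftrightarrow$(iii) gives the same conclusion for the derived Hom. Hence both sets are contained in $\{\ip\}$, and each contains $\ip$.

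For the reader's comfort we make the nonvanishing at $\iq=\ip$ explicit, since that is where the content lies. For the cosupport, the proof of (iii)$\Rightarrow$(i) already exhibits
\[
\rhom^\bullet_A(i_{\ip*}k(\ip),i_{\ip*}k(\ip))\cong i_{\ip*}\rhom^\bullet_{k(\ip)}(\LL i_{\ip}^*i_{\ip*}k(\ip),k(\ip)).
\]
Over the field $k(\ip)$ this is nonzero as soon as $\LL i_{\ip}^*i_{\ip*}k(\ip)\neq 0$. This nonvanishing holds because of the counit map $\LL i_{\ip}^*i_{\ip*}k(\ip)\to k(\ip)$. Composed with the unit $i_{\ip*}k(\ip)\to i_{\ip*}\LL i_{\ip}^*i_{\ip*}k(\ip)$, it gives the identity of $i_{\ip*}k(\ip)\neq 0$ by the triangle identity, so $k(\ip)$ is a retract of $\LL i_{\ip}^*i_{\ip*}k(\ip)$.

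For the support, we use the projection formula \ref{forproyeccióndga}:
\[
i_{\ip*}k(\ip)\otimes^{\LL}_A i_{\ip*}k(\ip)\cong i_{\ip*}\bigl(k(\ip)\otimes^{\LL}_{k(\ip)}\LL i_{\ip}^*i_{\ip*}k(\ip)\bigr)\cong i_{\ip*}\LL i_{\ip}^*i_{\ip*}k(\ip).
\]
This is nonzero by the same retract argument, since the forgetful functor $i_{\ip*}$ detects zero objects. Alternatively, the implication (ii)$\Rightarrow$(iii) of the lemma shows that vanishing of the tensor product forces vanishing of the derived Hom, which we have just shown to be nonzero.

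The only delicate point is this nonvanishing of $\LL i_{\ip}^*i_{\ip*}k(\ip)$. Since $A$ is a DG-algebra rather than a ring, one cannot simply say that this object is $k(\ip)\otimes_{k(\ip)}k(\ip)$. The adjunction retract argument above is what we will rely on, and it requires nothing beyond Proposition \ref{adjolvidosuperpor}.
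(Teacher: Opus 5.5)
Your proposal is correct and follows the paper, which simply states that the lemma is immediate from Lemma~\ref{tensorhomvanishesdga}; your added argument for $\LL i_{\ip}^*i_{\ip*}k(\ip)\neq 0$ fills in a point the paper only asserts. One small slip: the triangle identity you invoke shows that $i_{\ip*}k(\ip)$ is a retract of $i_{\ip*}\LL i_{\ip}^*i_{\ip*}k(\ip)$, which is enough for nonvanishing; the stronger claim that $k(\ip)$ itself is a retract of $\LL i_{\ip}^*i_{\ip*}k(\ip)$ comes from the other triangle identity applied to $A$, using $\LL i_{\ip}^*A\cong k(\ip)$.
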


\begin{proof}    
Follows immediately form Lemma \ref{tensorhomvanishesdga}.
\end{proof}

\section{Point generated DG-algebras}\label{pgDG}

Let us introduce notation for localizing and colocalizing subcategories.

\begin{definition}\label{defcoloc}
A triangulated subcategory $\SL$ of a triangulated category $\ST$ that is closed for coproducts is called a \emph{localizing subcategory}. Analogously a triangulated subcategory $\SC$ of $\ST$ is called a \emph{colocalizing subcategory} if it is closed for products.

A triangulated subcategory is called \emph{thick} if it contains the direct summands of its objects. It is a consequence of \emph{Eilenberg's swindle} that localizing and colocalizing subcategories are thick.
\end{definition}

\begin{cosa}
Denote by $\loc(S)$ the smallest localizing subcategory of a triangulated category $\ST$ that contains a set of objects $S\subset\ST$. We say that $\loc(S)$ is the localizing subcategory of $\ST$ \emph{generated} by $S$. If $S = \{M\}$ for $M \in \ST$ then we will abbreviate $\loc(M) = \loc(\{M\})$.

In a similar way we will denote by $\col(S)$ the smallest colocalizing subcategory of  $\ST$ 
that contains $S$.
\end{cosa}

\begin{cosa}\label{deforto}
Let $\SD$ be a subcategory of $\ST$, we define
\[
\SD^\perp := \{X\in\ST\ |\ \Hom(Y,X)=0,\ \forall\ Y\in \SD\}
\]
\[
^\perp\SD := \{X\in\ST\ |\ \Hom(X,Y)=0,\ \forall\ Y\in \SD\}
\]
We call $\SD^\perp$ the right orthogonal of $\SD$ and $^\perp\SD$ its left orthogonal.
\end{cosa}

We are interested in localizing subcategories of $\D(A)$ and of $\D(\Ho^0(A))$ where $A$ is a bounded commutative non positive DG-algebra $A$. We will use $\loc_{B}(S)$  and $\col_{B}(S)$, respectively, to distinguish between the localizing subcategories generated by a corresponding set $S$ in $\D(B)$ for any DG-algebra $B$.

\begin{lema}\label{restriccionlocalizantes}
    Let $f\colon A\to B$ be a homomorphism of DG-algebras and $M, N\in \D(B)$. If $M\in\loc_{B}(N)$ then $f_*M\in\loc_A(f_*N)$.
\end{lema}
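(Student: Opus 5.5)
The plan is the standard "preimage of a localizing subcategory" argument. Consider the full subcategory
\[
\SL := \{X \in \D(B) \mid f_*X \in \loc_A(f_*N)\}.
\]
I will show that $\SL$ is a localizing subcategory of $\D(B)$ and that it contains $N$. Minimality of $\loc_B(N)$ then gives $\loc_B(N)\subset \SL$, so every $M\in\loc_B(N)$ satisfies $f_*M\in\loc_A(f_*N)$.

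That $N\in\SL$ is immediate, since $f_*N\in\loc_A(f_*N)$. For the remaining properties, the key input is that $f_*\colon\D(B)\to\D(A)$ is a triangulated functor that commutes with arbitrary coproducts. It is exact because it is induced by the forgetful functor $\DGAM(B)\to\DGAM(A)$. This forgetful functor preserves cones and shifts, since these are built from the underlying complexes, and it preserves quasi-isomorphisms, since homology is computed on underlying complexes. It commutes with coproducts because coproducts in $\D(B)$ and $\D(A)$ are both computed as direct sums of DG-modules, and forgetting the $B$-action commutes with direct sums. Alternatively, one can argue that $f_*$ has the right adjoint $f^\times$ by Proposition \ref{adjolvidosuperpor}(2), and a left adjoint preserves all coproducts.

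With this in hand, the closure properties of $\SL$ follow from those of $\loc_A(f_*N)$.
\begin{itemize}
\item[(a)] \emph{Shifts.} $f_*(X[1])\cong (f_*X)[1]$, so $\SL$ is closed under shifts.
\item[(b)] \emph{Triangles.} Given a distinguished triangle $X\to Y\to Z\to X[1]$ in $\D(B)$ with $X,Y\in\SL$, its image under $f_*$ is distinguished. Hence $f_*Z$ lies in the triangulated subcategory $\loc_A(f_*N)$, i.e. $Z\in\SL$.
\item[(c)] \emph{Coproducts and isomorphism closure.} $f_*(\coprod X_i)\cong\coprod f_*X_i$, so $\SL$ is closed under coproducts. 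It is also closed under isomorphisms, hence replete.
\end{itemize}

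The only point needing care is the verification that $f_*$ is triangulated and preserves coproducts at the level of derived categories. This is routine: the forgetful functor preserves quasi-isomorphisms, so it descends to $\D(B)\to\D(A)$ without any resolutions, and the adjunction with $f^\times$ gives coproduct preservation.
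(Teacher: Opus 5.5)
Your proof is correct and follows essentially the paper's approach: the paper's proof is the one-line remark that $f_*\colon\D(B)\to\D(A)$ is triangulated and preserves coproducts. Your preimage subcategory $\SL=\{X\mid f_*X\in\loc_A(f_*N)\}$, shown to be localizing and to contain $N$, is the standard unpacking of that remark, with the coproduct preservation correctly justified via the adjunction $f_*\dashv f^\times$.
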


\begin{proof}
It follows from the fact that the functor $f_*\colon\D(B)\to\D(A)$ is triangulated and preserves coproducts.
\end{proof}

\begin{remark}\label{observacion522}
 If $A$ is a non positive DG-algebra and  $r\colon A \to \Ho^0(A)$ is the canonical morphism, as a consequence of the previous result, for $M, N\in\D(\Ho^0(A))$, if $M$ belongs to $\loc_{\Ho^0(A)}(N)$ then $r_*M\in\loc_A(r_*N)$. 
\end{remark}

\begin{lema}\label{eldelostruncamientos}
If $A$ is a bounded commutative non positive DG-algebra of finite amplitude, then $A$ belongs to $\loc_A(r_*(\Ho^0(A)))$.
\end{lema}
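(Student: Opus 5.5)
Since $A$ is bounded and non positive, there is $N\ge 0$ with $\Ho^i(A)=0$ for $i\notin[-N,0]$. The plan is to induct on the amplitude using the truncations of \ref{truncamientoint}, showing that every DG-$A$-module whose homology is concentrated in a single degree lies in $\loc_A(r_*\Ho^0(A))$. We then rebuild $A$ from these pieces by finitely many triangles.

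\textbf{Step 1: single-degree modules.} Let $M\in\D(A)$ have homology only in degree $n$. We claim $M\cong r_*(H)[-n]$ in $\D(A)$, where $H=\Ho^n(M)$ is regarded as an $\Ho^0(A)$-module. Two facts make this work:
\begin{itemize}
\item The $A$-action on $\Ho^n(M)$ factors through $\Ho^0(A)$, since $\Ho^\bullet(M)$ is a graded $\Ho^\bullet(A)$-module.
\item Truncation respects the module structure because $A$ is non positive: $\tau^{\leq n}M$ is a DG-submodule of $M$, since $A^{<0}$ sends $\ker d^n$ into lower degrees and $d(am)=d(a)m\pm a\,dm$.
\end{itemize}
Then $\tau^{\geq n}\tau^{\leq n}M$ is a DG-$A$-module concentrated in degree $n$, equal to $\Ho^n(M)$. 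It is reached from $M$ by a zigzag of quasi-isomorphisms $M\leftarrow \tau^{\leq n}M\to \tau^{\geq n}\tau^{\leq n}M$. A DG-module concentrated in one degree is killed by $A^{<0}$ for degree reasons, and also by $d(A^{-1})$, because $d(a)m=d(am)\pm a\,dm=0$. So the action of $A$ factors through $A^0/d(A^{-1})=\Ho^0(A)$, that is, through $r$. Hence $M\cong r_*(H)[-n]$.

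Next, by Theorem \ref{LocK(x)generan}-independent reasoning in the ring case, every $\Ho^0(A)$-module $H$ lies in $\loc_{\Ho^0(A)}(\Ho^0(A))=\D(\Ho^0(A))$: take a free resolution, which is a homotopy colimit of bounded complexes of free modules. Remark \ref{observacion522} then gives $r_*H\in\loc_A(r_*\Ho^0(A))$, and therefore $M\in\loc_A(r_*\Ho^0(A))$ after shifting.

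\textbf{Step 2: induction on amplitude.} For any $M$ with homology in $[a,b]$, the truncation sequence $0\to\tau^{\leq b-1}M\to M\to M/\tau^{\leq b-1}M\to 0$ is a short exact sequence of DG-$A$-modules, again because $\tau^{\le b-1}M$ is a submodule. The quotient has homology only in degree $b$, and $\tau^{\le b-1}M$ has smaller amplitude. Short exact sequences of DG-modules give distinguished triangles in $\D(A)$ via the cone construction. Since localizing subcategories are triangulated, induction on $b-a$ shows $M\in\loc_A(r_*\Ho^0(A))$. Applying this to $M=A$, with amplitude at most $N$, finishes the proof. In fact only finitely many triangles are needed, so $A$ lies even in the thick subcategory generated by $\{r_*H\}$.

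\textbf{Main obstacle.} The delicate point is Step 1: identifying a single-degree DG-module with $r_*$ of an $\Ho^0(A)$-module, using genuine DG-module maps rather than only isomorphisms of homology. We handle this with the explicit zigzag of truncations above. Commutativity of $A$ is needed only so that $\Ho^0(A)$-modules carry the correct bimodule structure.
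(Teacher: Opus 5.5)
Your proof is correct and takes essentially the paper's route: a finite d\'evissage of $A$ by truncation triangles into single-homology pieces, each identified with a shift of $r_*$ of an $\Ho^0(A)$-module and hence lying in $\loc_A(r_*\Ho^0(A))$, via Corollary \ref{compactamentegeneradacor} for the ring $\Ho^0(A)$ and Lemma \ref{restriccionlocalizantes}. The differences are minor. You peel off the top homology using the sub-DG-module $\tau^{\le b-1}M$, whereas the paper attaches the bottom piece via $\tau^{\le n}\tau^{\ge n}A\to\tau^{\ge n}A\to\tau^{\ge n+1}A$, and you explicitly justify that a DG-module concentrated in one degree is of the form $r_*(H)[-n]$, which the paper uses without comment.
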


\begin{proof}
Recall that for $n\in\mathbb{Z}$ its homology $\Ho^n(A)$ is an $\Ho^0(A)$-module, therefore $\Ho^n(A)$ belongs to $\loc_{\Ho^0(A)}(\Ho^0(A))$ so by the previous lemma we have that $\Ho^n(A)\in\loc_{A}(r_*(\Ho^0(A)))$. As the DG-algebra $A$ is non positive, it follows that $\tau^{\geq 0}A=\Ho^0(A)\in\loc_A(r_*(\Ho^0(A)))$.

For an integer $n<0$ assume that $\tau^{\geq n+1}A$ belongs to $\loc_A(r_*(\Ho^0(A)))$. Consider the following distinguished triangle
\[
  \tau^{\leq n}\tau^{\geq n}A\lto \tau^{\geq n}A
  \lto \tau^{\geq n+1}A \overset{+}\lto
\]
As $\tau^{\leq n}\tau^{\geq n}A\cong\Ho^{n}(A)$ and $\tau^{\geq n+1}A$ belong to $\loc_{A}(r_*\Ho^0(A))$ therefore $\tau^{\geq n}A$ belongs to $\loc_A(r_*(\Ho^0(A)))$. By induction it follows that $ \tau^{\geq n}A$ belongs to the subcategory $\loc_A(r_*(\Ho^0(A)))$ for all  $n\leq 0$, and thus $\tau^{\geq m}A=A\in\loc_A(r_*(\Ho^0(A))),$ where $m$ denotes the smallest integer such that $\Ho^m(A) \neq 0$.
\end{proof}

\begin{remark}
 The previous lemmas are also at the beginning of \cite[$\S$4]{SW}. We have included them couched in our notation and terminology for easy reference. 
\end{remark}

\begin{definition}\label{defpg}
Let $A$ be a non positive commutative DG-algebra. With the notation of \ref{goodnotat}, if 
\[
\D(A) = \loc_A\big(\big\{i_{\ip*}k(\ip)\ |\ \ip\in\spec(\Ho^0(A))\big\}\big)
\]
we say that $A$ is \emph{point generated}. Of course, the same definition applies to an ordinary commutative ring such as $\Ho^0(A)$.
\end{definition}

Our next task is to prove that $A$ is point generated whenever $\Ho^0(A)$ is. We begin by showing that for \emph{any} DG-algebra $A$, $A$ itself is a compact generator of its derived category, $\D(A)$.

\begin{lema}\label{Agencom}
The DG-algebra $A$ as a DG-module over itself is a compact generator of\/ $\D(A)$.
\end{lema}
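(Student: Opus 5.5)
The plan rests on one identification: for every DG-module $M$ and every $n\in\ZZ$,
\[
\Hom_{\D(A)}(A[n],M)\cong \Hom_{\K(A)}(A[n],M)\cong \Ho^{-n}(M).
\]
Both compactness and generation will follow from it.

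First I will check that $A$ is K-projective, that is, $\Hom_A^\bullet(A,-)$ sends acyclic DG-modules to acyclic complexes. The map $\phi\mapsto \phi(1)$ gives an isomorphism of complexes $\Hom_A^\bullet(A,M)\cong M$. Since $A$ is K-projective, Theorem \ref{existenciaresolucionesDGA} shows that morphisms out of $A$ in $\D(A)$ agree with those in $\K(A)$. Hence
\[
\Hom_{\D(A)}(A,M[n])\cong \Ho^0(\Hom_A^\bullet(A,M[n]))\cong \Ho^n(M).
\]
I must track the signs in the shift convention, but only the identification up to isomorphism matters here.

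\textbf{Compactness.} Coproducts in $\D(A)$ are computed as degreewise direct sums of DG-modules; this is the description alluded to in \ref{derivadaDGA}, and it holds because a direct sum of quasi-isomorphisms is a quasi-isomorphism. Homology commutes with direct sums of complexes. Therefore
\[
\Hom_{\D(A)}\Big(A,\bigoplus_i M_i\Big)\cong \Ho^0\Big(\bigoplus_i M_i\Big)\cong\bigoplus_i \Ho^0(M_i)\cong\bigoplus_i\Hom_{\D(A)}(A,M_i).
\]
One should also confirm that this isomorphism is the canonical comparison map, which is clear by naturality.

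\textbf{Generation.} If $\Hom_{\D(A)}(A[n],M)=0$ for all $n$, then $\Ho^{-n}(M)=0$ for all $n$. So $M$ is acyclic, and hence $M\cong 0$ in $\D(A)$. Thus $\{A[n]\}^\perp=0$. For a compact object in a triangulated category with coproducts, this standard condition is equivalent to $\loc_A(A)=\D(A)$ by Neeman's theorem (Brown representability / Bousfield localization for compactly generated categories). I will cite that result rather than reprove it.

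The main obstacle is the first step, namely justifying that $\Hom_{\D(A)}(A,-)=\Ho^0(-)$, which requires the K-projectivity of $A$ together with the coproduct description in $\D(A)$. Both are standard facts in \cite{Y1}, so I expect the proof to reduce to pointing there.
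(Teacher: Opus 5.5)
Your proposal is correct and follows the paper's proof. You identify $\Hom_{\D(A)}(A[n],M)$ with a homology group of $M$, then read off generation from the vanishing of all homology and compactness from $\Ho^0$ commuting with coproducts; you also make explicit the K-projectivity of $A$ that the paper leaves implicit, and your index $\Ho^{-n}(M)$ is the right one for the usual shift convention. The closing appeal to Neeman's theorem to get $\loc_A(A)=\D(A)$ is not needed for this lemma; the paper carries out that step separately in Proposition \ref{compactamentegenerada} and Corollary \ref{compactamentegeneradacor}.
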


\begin{proof}
    Let $M\in\D(A)$ be a DG-module over $A$. For all $n\in\mathbb{Z}$ it holds that 
    \[
    \Hom_{\D(A)}(A[n],M)=\Ho^n(M).
    \]
Therefore, if $\Hom_{\D(A)}(A[n],M)=0,$ for all $n\in\mathbb{Z}$, then $M=0,$ in other words, $A$ is a generator of  the category $\D(A)$. Moreover, the functor $\Hom_{\D(A)}(A,-)=\Ho^0(-)$ preserve coproducts, thus $A$ is compact.
\end{proof}

\begin{proposition}\label{compactamentegenerada}
  Let $\ST$ be a triangulated category with a compact generator $X$. Then, $\loc(X)=\ST$.
\end{proposition}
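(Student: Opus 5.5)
The plan is the standard Brown representability / Bousfield localization argument: show that the right orthogonal of $\loc(X)$ vanishes, and then deduce that $\loc(X)$ is everything. Here $\ST$ is tacitly assumed to have arbitrary coproducts, as is $\D(A)$ in the intended application of Lemma \ref{Agencom}. By a compact generator we mean that $X$ is compact and that $\Hom(X[n],M)=0$ for all $n\in\ZZ$ forces $M=0$. Set $\SL=\loc(X)$.

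\textbf{Step 1.} Show that $\SL$ is the localizing subcategory of a localization. Since $X$ is compact, $\SL$ is compactly generated by the single compact object $X$. By Neeman's version of Brown representability, the inclusion $\SL\hookrightarrow\ST$ then admits a right adjoint $\Gamma$; equivalently, $\SL$ is the kernel of a Bousfield localization. Concretely, for each $M\in\ST$ one can build $\Gamma M\to M$ as a homotopy colimit of cellular approximations by coproducts of shifts of $X$. The result is a distinguished triangle
\[
\Gamma M\lto M\lto L M\overset{+}\lto
\]
with $\Gamma M\in\SL$ and $LM\in\SL^\perp$.

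\textbf{Step 2.} Show that $\SL^\perp=0$. Take $N\in\SL^\perp$. Since $X[n]\in\SL$ for every $n$, we get $\Hom(X[n],N)=0$ for all $n\in\ZZ$. Because $X$ is a generator, $N=0$.

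\textbf{Step 3.} Conclude. By Step 2, $LM\in\SL^\perp=0$, so the triangle gives $\Gamma M\cong M$. Hence $M\in\SL$, which proves $\loc(X)=\ST$.

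The main obstacle is Step 1, the existence of the triangle. I would invoke it from the literature (Neeman's \emph{Brown representability} theorem, or the Bousfield localization theory in \cite{AJS3}) rather than reprove it. If a self-contained argument is wanted, the construction runs as follows. Set $M_0=\bigoplus_{n,\ f\colon X[n]\to M}X[n]$, and inductively kill the kernel of $\Hom(X[\ast],M_i)\to\Hom(X[\ast],M)$ via cones. Then take $\Gamma M=\mathrm{hocolim}\,M_i$. Compactness of $X$ is what makes $\Hom(X[n],-)$ commute with this homotopy colimit, and this shows that the cone of $\Gamma M\to M$ is annihilated by all $\Hom(X[n],-)$. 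That last verification is exactly the step where compactness is used.
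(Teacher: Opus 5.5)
Your proof is correct and is essentially the paper's argument. The paper also gets a right adjoint to $\loc(X)\hookrightarrow\ST$ from the compactness of $X$ (via Neeman), deduces a triangle $Y_M\lto M\lto Z_M\overset{+}{\lto}$ with $Y_M\in\loc(X)$ and $Z_M\in\loc(X)^\perp$, and concludes from $\loc(X)^\perp=0$; your cellular-tower sketch is just the standard construction behind that adjoint and is not needed.
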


\begin{proof}
    If $X$ is a compact object of $\ST$, by \cite[1.7]{Ntty} the inclusion $\loc(X)\hookrightarrow\ST$ has a right adjoint. According to \cite[(Proposition 1.6)]{AJS1} (v) and (vi) this fact gives for any $M\in\ST$, a distinguished triangle
        \[
        Y_M\lto M\lto Z_M\overset{+}{\lto}
        \]
such that $Y_M \in \loc(X)$ and $Z_M \in  \loc(X)^\perp$. As $X$ is a generator of $\ST$, it follows that $\loc(X)^\perp=0$. Therefore $Z_M=0$ and $M\cong Y_M\in\loc(X)$.
\end{proof}

\begin{corollary}\label{compactamentegeneradacor}
    For any DG-algebra $A,$ it holds that
    \[
    \loc_A(A)=\D(A).
    \]
\end{corollary}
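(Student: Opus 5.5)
This is immediate from the two results just established. By Lemma \ref{Agencom}, the DG-module $A$, viewed over itself, is a compact generator of the triangulated category $\D(A)$. We take $\ST=\D(A)$ and $X=A$ in Proposition \ref{compactamentegenerada}, which gives $\loc_A(A)=\D(A)$.

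Before invoking the proposition we check its standing hypotheses.

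\emph{$\D(A)$ has arbitrary coproducts.} This was recorded in \ref{derivadaDGA}. It is needed because Neeman's result \cite[1.7]{Ntty} supplies the right adjoint to $\loc(X)\hookrightarrow\ST$.

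\emph{Compactness of $A$.} The compactness argument in Lemma \ref{Agencom} uses $\Hom_{\D(A)}(A[n],M)=\Ho^n(M)$. Homology commutes with coproducts of DG-modules, and coproducts in $\D(A)$ are computed by degreewise direct sums. So $\Hom_{\D(A)}(A,-)$ preserves coproducts.

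\emph{$A$ is a generator.} This is the vanishing statement in the same lemma. It is used in the form $\loc(A)^\perp=0$: an object $Z$ with $\Hom(A[n],Z)=0$ for all $n$ is zero, because $A[n]\in\loc(A)$.

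With these in place there is no real obstacle; the only step carrying content is already in Proposition \ref{compactamentegenerada}. For completeness we recall that argument. Take $M\in\D(A)$ and form the localization triangle $Y_M\to M\to Z_M\overset{+}{\to}$ with $Y_M\in\loc_A(A)$ and $Z_M\in\loc_A(A)^\perp$. Since $\loc_A(A)^\perp=0$, we get $Z_M=0$, hence $M\cong Y_M\in\loc_A(A)$.
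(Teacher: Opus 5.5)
Your proposal is correct and matches the paper's proof, which likewise just combines Lemma~\ref{Agencom} (that $A$ is a compact generator) with Proposition~\ref{compactamentegenerada}. Your extra checks (coproducts in $\D(A)$, compactness via homology commuting with direct sums, and $\loc_A(A)^\perp=0$) only spell out the hypotheses that the paper leaves implicit.
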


\begin{proof}
   It follows immediately from Lemma \ref{Agencom} and Proposition \ref{compactamentegenerada}.
\end{proof}

The following is the main result of this section.

\begin{theorem}\label{k(p)generan}
Let $A$ be a bounded commutative non positive DG-algebra such that $\Ho^0(A)$ is point generated. The smallest localizing subcategory of $\D(A)$ that contains   $i_{\ip*}k(\ip)$ for all  $\ip\in\spec(\Ho^0(A))$ is all of\/ $\D(A)$, in other words, $A$ is itself point generated.
\end{theorem}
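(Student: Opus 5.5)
Let $\SL := \loc_A\big(\{i_{\ip*}k(\ip) \mid \ip\in\spec(\Ho^0(A))\}\big)$. By Corollary \ref{compactamentegeneradacor}, $\D(A)=\loc_A(A)$. Since $\SL$ is a localizing subcategory, it therefore suffices to show that $A\in\SL$.

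The argument passes through $\Ho^0(A)$ in two steps. First, Lemma \ref{eldelostruncamientos} applies because $A$ is bounded, commutative and non positive; it gives $A\in\loc_A(r_*\Ho^0(A))$. Hence it is enough to show that $r_*\Ho^0(A)\in\SL$, since then $\loc_A(r_*\Ho^0(A))\subset\SL$.

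Second, since $\Ho^0(A)$ is point generated, we have
\[
\Ho^0(A)\in\D(\Ho^0(A))=\loc_{\Ho^0(A)}\big(\{j_{\ip*}k(\ip)\mid \ip\in\spec(\Ho^0(A))\}\big).
\]
Lemma \ref{restriccionlocalizantes}, in the form of Remark \ref{observacion522}, is stated for a single generator $N$. It extends verbatim to a set of generators, because $r_*$ is triangulated and preserves coproducts. Concretely, the full subcategory
\[
\{M\in\D(\Ho^0(A)) \mid r_*M\in\SL\}
\]
is localizing and contains every $j_{\ip*}k(\ip)$. This uses $r_*j_{\ip*}k(\ip)\cong i_{\ip*}k(\ip)$, which holds by pseudofunctoriality of $(-)_*$ and $i_\ip=j_\ip\circ r$. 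So this subcategory is all of $\D(\Ho^0(A))$; in particular $r_*\Ho^0(A)\in\SL$. Combining the two steps gives $A\in\SL$, and hence $\SL=\D(A)$.

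The only point needing care is this extension from one generator to a family. It is formal: it is the usual observation that the preimage of a localizing subcategory under a coproduct-preserving triangulated functor is localizing. I expect no serious obstacle, since the substantive input (the truncation induction) is already in Lemma \ref{eldelostruncamientos}.
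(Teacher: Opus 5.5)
Your proposal is correct and follows the paper's proof essentially step for step. You reduce to $A\in\SL$ via $\loc_A(A)=\D(A)$, use Lemma \ref{eldelostruncamientos} to get $A\in\loc_A(r_*\Ho^0(A))$, and push the point generation of $\Ho^0(A)$ forward along $r_*$ to obtain $r_*\Ho^0(A)\in\SL$; your preimage argument for a family of generators, together with $r_*j_{\ip*}k(\ip)\cong i_{\ip*}k(\ip)$, simply makes explicit what the paper leaves implicit when it invokes Lemma \ref{restriccionlocalizantes}.
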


\begin{proof}
    Let $\SL\subset\D(A)$ be the smallest localizing subcategory of $\D(A)$  generated by all the residue fields of $\Ho^0(A)$, \ie\/
\[
 \SL=\loc_A\big(\big\{i_{\ip*}k(\ip)\ |\ \ip\in\spec(\Ho^0(A))\big\}\big)
\]
    By the previous Proposition, to prove that $\SL = \D(A)$ it is enough to check that $A \in \SL$.

    By Lemma \ref{eldelostruncamientos}, we have that $A\in\loc_A(r_*(\Ho^0(A)))$. But $\Ho^0(A)$ is point generated therefore
\[
\D(\Ho^0(A)) =
\loc_{\Ho^0(A)}\big(\big\{j_{\ip*}k(\ip)\ |\ \ip\in\spec(\Ho^0(A))\big\}\big),
\]
so we are in position to apply Lemma \ref{restriccionlocalizantes} from which it follows that $r_*(\Ho^0(A))\in\SL$. Therefore, by Lemma \ref{eldelostruncamientos}, it holds that $A\in\loc_A(r_*\Ho^0(A))=\SL$.
\end{proof}

\section{Classifying localizing subcategories}\label{clasloc}

In this section we will classify the localizing subcategories of a bounded commutative non positive DG-algebra, assuming that $A$ is point generated.

\begin{lema}\label{tensoridealDGA}
    Let $A$ be a DG-algebra and let $\SL$ be a localizing subcategory of $\D(A)$. For any $M \in \SL$ and $N \in \D(A)$, it holds that  $M \otimes^{\LL}_A N \in \SL$.
\end{lema}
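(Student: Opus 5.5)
Fix $M \in \SL$ and swap the roles of the variables: consider the full subcategory
\[
\CT_M := \{N \in \D(A) \mid M \otimes^{\LL}_A N \in \SL\}.
\]
It suffices to show that $\CT_M = \D(A)$. By Corollary \ref{compactamentegeneradacor} we have $\loc_A(A) = \D(A)$. So it is enough to check two things: that $\CT_M$ is a localizing subcategory of $\D(A)$, and that it contains $A$.

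Containment of $A$ is immediate. The unit isomorphism $M \otimes^{\LL}_A A \cong M$ holds in $\D(A)$: to compute the derived tensor we may take a K-projective resolution of either variable, and $A$ is K-projective over itself. Since $M \in \SL$, it follows that $A \in \CT_M$.

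For the localizing property, we use that $M \otimes^{\LL}_A (-)\colon \D(A) \to \D(A)$ is a triangulated functor commuting with arbitrary coproducts. Concretely, choose a K-projective (hence K-flat) resolution $P \to M$. Then $M \otimes^{\LL}_A (-) = P \otimes_A (-)$ is computed on $\K(A)$. There it preserves mapping cones and shifts, because the tensor product of graded modules commutes with these constructions, with the usual sign conventions on the differential. It also preserves quasi-isomorphisms, by K-flatness, and it commutes with direct sums at the level of DG-modules. Coproducts in $\D(A)$ are computed by direct sums of DG-modules, since $\Ho^\bullet$ commutes with direct sums.

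Consequently, $\CT_M$ has the following closure properties:
\begin{itemize}
\item It is closed under shifts, because $\SL$ is.
\item It is closed under coproducts, because $\SL$ is closed under coproducts and the functor commutes with them.
\item It satisfies the two-out-of-three property in distinguished triangles, because the functor sends a triangle $N' \to N \to N'' \overset{+}\to$ to a distinguished triangle and $\SL$ is triangulated.
\end{itemize}
It is also closed under isomorphisms, since $\SL$, being thick, is replete. Therefore $\CT_M$ is localizing, it contains $A$, and hence $\CT_M \supseteq \loc_A(A) = \D(A)$.

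The only point requiring care is the verification that $M \otimes^{\LL}_A (-)$ is triangulated and coproduct-preserving in the DG setting. I expect this to be routine given the existence of K-projective resolutions (Theorem \ref{existenciaresolucionesDGA}) and the standard facts in \cite{Y1}. Note also that no commutativity of $A$ is really needed, beyond making sense of $M \otimes^{\LL}_A N$ as an object of $\D(A)$. If $A$ is not commutative one would regard the tensor as taken with bimodule structures, but in the paper's setting $A$ is commutative.
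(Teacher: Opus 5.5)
Your proof is correct and follows essentially the same route as the paper: you show that the $N$ with $M\otimes^{\LL}_A N\in\SL$ form a localizing subcategory containing $A$, and then invoke Corollary~\ref{compactamentegeneradacor}. The only difference is that you fix $M$ first, whereas the paper quantifies over all $M\in\SL$ at once; this makes no difference, and your check that $M\otimes^{\LL}_A(-)$ is triangulated and preserves coproducts spells out a step the paper leaves implicit.
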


\begin{proof}
   Define the subcategory $\SD\subset\D(A)$ by
    \[
    \SD:=\big\{ N \in \D(A)\ |\ M \otimes_A^{\LL} N\in{\SL},\ \forall M\in\SL\big\}
    \]
The subcategory $\SD$ is a localizing subcategory of $\D(A)$ and contains $A$. By Corollary \ref{compactamentegeneradacor} it follows that $\SD=\D(A)$.
\end{proof}

\begin{remark}
 This can be restated saying that for any DG-algebras $A$, all localizing subcategories of $\D(A)$ are $\otimes$-ideals.
\end{remark}

\begin{lema}\label{tensorcero}
    Let $A$ be a bounded commutative non positive DG-algebra, $\SL\subset \D(A)$ a localizing subcategory and $\ip\in\spec(\Ho^0(A))$. In this case, $i_{\ip*}k(\ip)\in\SL$ if, and only if, there exists a  DG-module $M\in\SL$ such that $ \ip \in  \ssupp_A(M)$.
\end{lema}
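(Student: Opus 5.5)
The forward implication is immediate: if $i_{\ip*}k(\ip)\in\SL$, take $M=i_{\ip*}k(\ip)$. By Lemma \ref{soportekp}, $\ssupp_A(i_{\ip*}k(\ip))=\{\ip\}$, so $\ip\in\ssupp_A(M)$.

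For the converse, I would take $M\in\SL$ with $\ip\in\ssupp_A(M)$, so that $i_{\ip*}k(\ip)\otimes^{\LL}_A M\neq 0$. By Lemma \ref{tensoridealDGA}, localizing subcategories are $\otimes$-ideals, hence $i_{\ip*}k(\ip)\otimes^{\LL}_A M\in\SL$. The strategy is to show that this object is a nonzero coproduct of shifts of $i_{\ip*}k(\ip)$. Then $i_{\ip*}k(\ip)$ is a direct summand of an object of $\SL$, and it lies in $\SL$ because localizing subcategories are thick (Definition \ref{defcoloc}).

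To get this description I would apply the projection formula \ref{forproyeccióndga} to the homomorphism $i_{\ip}\colon A\to k(\ip)$ in $\DGA^{\leq 0}$, with $N=k(\ip)$. This gives
\[
i_{\ip*}k(\ip)\otimes^{\LL}_A M\cong i_{\ip*}\big(k(\ip)\otimes^{\LL}_{k(\ip)}\LL i_{\ip}^*M\big)\cong i_{\ip*}\LL i_{\ip}^*M .
\]
Set $V:=\LL i_{\ip}^*M\in\D(k(\ip))$. Since $k(\ip)$ is a field concentrated in degree $0$, $V$ is a complex of $k(\ip)$-vector spaces. Every such complex splits, so it is quasi-isomorphic to its homology, and $V\cong\bigoplus_n \Ho^n(V)[-n]$ is a coproduct of shifts of $k(\ip)$.

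The functor $i_{\ip*}$ is triangulated and preserves coproducts, so $i_{\ip*}V$ is a coproduct of shifts of $i_{\ip*}k(\ip)$. This coproduct is nonzero, since $i_{\ip*}V\cong i_{\ip*}k(\ip)\otimes^{\LL}_A M\neq0$ (equivalently, $V\neq0$, as $i_{\ip*}$ does not change underlying homology). Some shift $i_{\ip*}k(\ip)[m]$ therefore occurs as a direct summand of an object of $\SL$, hence lies in $\SL$, and shifting back gives $i_{\ip*}k(\ip)\in\SL$.

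The step needing the most care is the projection-formula identification together with the splitting over the field $k(\ip)$. Once $i_{\ip*}k(\ip)\otimes^{\LL}_A M$ is identified with $i_{\ip*}$ of a $k(\ip)$-complex, the rest is formal.
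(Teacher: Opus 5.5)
Your proof is correct and follows the paper's argument. The paper likewise gets the easy direction from Lemma \ref{tensorhomvanishesdga}. For the other direction it uses the $\otimes$-ideal property (Lemma \ref{tensoridealDGA}), the projection formula $i_{\ip*}k(\ip)\otimes^{\LL}_A M\cong i_{\ip*}\LL i_{\ip}^*M$, the splitting of complexes over the field $k(\ip)$ into shifts of $k(\ip)$, and thickness of $\SL$.
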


\begin{proof}
    By Lemma \ref{tensorhomvanishesdga} the reverse implication follows immediately taking  $M = i_{\ip*}k(\ip)$. 
    For the direct implication, let $M \in \SL$ such that $i_{\ip*}k(\ip)\otimes_A^{\LL} M \neq 0$. By Lemma \ref{tensoridealDGA}, $i_{\ip*}k(\ip)\otimes_A^{\LL}M$ belongs to $\SL$, too. Moreover, by the projection formula  
   \[
   i_{\ip*}k(\ip)\otimes_A^{\LL} M 
   \cong i_{\ip*}(k(\ip)\otimes^{\LL}_{k(\ip)}\LL i_{\ip}^*M)
   = i_{\ip*}(\LL i_{\ip}^*M)
   \]
Thus $\LL i_{\ip}^*M$ is a non vanishing complex of $k(\ip)$-vector spaces, therefore it is isomorphic to a coproduct of shifts of $k(\ip)$ where $k(\ip)$ is regarded as a complex concentrated in a single degree. Notice that $i_{\ip*}$ preserves coproducts and that $\SL$ is thick. From this it follows that $i_{\ip*}k(\ip)$ belongs to $\SL$.
\end{proof}

\begin{proposition}\label{5210}
Let $A$ be a point generated bounded commutative non positive DG-algebra, and\/ $\SL\subset \D(A)$ a localizing subcategory. Then it holds that
    \[
    \SL=\loc_A\big(\big\{i_{\ip*}k(\ip)\ |\ i_{\ip*}k(\ip)\in\SL\big\}\big)
    \]
\end{proposition}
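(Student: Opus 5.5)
One inclusion is immediate: $\SL$ is localizing and contains every $i_{\ip*}k(\ip)$ with $i_{\ip*}k(\ip)\in\SL$, so
\[
\SL' := \loc_A\big(\big\{i_{\ip*}k(\ip)\ |\ i_{\ip*}k(\ip)\in\SL\big\}\big)\subset\SL .
\]
For the reverse inclusion, fix $M\in\SL$ and show $M\in\SL'$. The idea is to write $M\cong A\otimes^{\LL}_A M$ and use point generation of $A$ to replace $A$ by the residue fields.

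Consider the full subcategory
\[
\SD:=\big\{N\in\D(A)\ |\ N\otimes^{\LL}_A M\in\SL'\big\}.
\]
Since $-\otimes^{\LL}_A M$ is triangulated and preserves coproducts, and $\SL'$ is localizing, $\SD$ is a localizing subcategory of $\D(A)$. By Definition \ref{defpg}, it then suffices to check that $i_{\ip*}k(\ip)\in\SD$ for every $\ip\in\spec(\Ho^0(A))$. Granting this, $\SD=\D(A)$, so $A\in\SD$ and $M\cong A\otimes^{\LL}_A M\in\SL'$.

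To verify the generators, fix $\ip$ and split into two cases.
\begin{itemize}
\item If $\ip\notin\ssupp_A(M)$, then $i_{\ip*}k(\ip)\otimes^{\LL}_A M=0\in\SL'$.
\item If $\ip\in\ssupp_A(M)$, then Lemma \ref{tensorcero} (applied to $M\in\SL$) gives $i_{\ip*}k(\ip)\in\SL$. Hence $i_{\ip*}k(\ip)$ is one of the generators of $\SL'$, so it lies in $\SL'$. Lemma \ref{tensoridealDGA}, applied to the localizing subcategory $\SL'$, then yields $i_{\ip*}k(\ip)\otimes^{\LL}_A M\in\SL'$.
\end{itemize}
This proves the claim.

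The only delicate point is the second case, where one must remember to apply the tensor-ideal property to $\SL'$ rather than to $\SL$. Lemma \ref{tensoridealDGA} holds for arbitrary localizing subcategories, so this step needs no extra input. Everything else is formal, and the boundedness hypothesis enters only through Lemma \ref{tensorcero}.
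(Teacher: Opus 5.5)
Your proof is correct and is essentially the paper's argument: both define a localizing subcategory $\SD$ by tensoring into $\loc_A(\{i_{\ip*}k(\ip)\mid i_{\ip*}k(\ip)\in\SL\})$, check the residue fields using Lemma \ref{tensorcero} and Lemma \ref{tensoridealDGA} (the latter applied to the smaller subcategory), and conclude by point generation. The differences are cosmetic: the paper quantifies over all $N\in\SL$ at once and splits on whether $i_{\ip*}k(\ip)\in\SL$, whereas you fix $M\in\SL$ and split on whether $\ip\in\ssupp_A(M)$.
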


\begin{proof}
    Let $\widetilde{\SL}:=\loc_A\big(\big\{i_{\ip*}k(\ip)\ |\ i_{\ip*}k(\ip)\in\SL\big\}\big)$, and let $\SD\subset\D(A)$ be the localizing subcategory
    \[
    \SD:=\big\{ M\in \D(A)\ |\ M\otimes_A^{\LL} N\in\widetilde{\SL},\ \forall N\in\SL\big\}.
    \]
Let us check that $\SD$ contains all DG-modules $i_{\ip*}k(\ip),$ with $\ip\in\spec(\Ho^0(A))$. Indeed, If $i_{\ip*}k(\ip)\in \SL,$ by Lemma \ref{tensoridealDGA} then $i_{\ip*}k(\ip)\otimes_A^{\LL} N\in\widetilde{\SL}$ for all $N\in\D(A)$, in particular  for any $N\in\SL$, thus $i_{\ip*}k(\ip)\in \SD$. If on the other hand $i_{\ip*}k(\ip)\notin \SL$, by Lemma \ref{tensorcero} it holds that $i_{\ip*}k(\ip)\otimes_A^{\LL} N=0$ for all $N\in\SL$, therefore it belongs to $\widetilde{\SL}.$ All in all, $i_{\ip*}k(\ip)\in \SD$ for any $\ip\in\spec(\Ho^0(A))$. As $A$ is point generated by assumption, then $\SD=\D(A)$. We see that $A\in \SD$ and hence $\widetilde{\SL}=\SL$.
\end{proof}

\begin{cosa}
 For any set $X$, we denote by $\mathcal{P}(X)$ the set of susbsets of $X$, \ie\/ the powerset of $X$.
\end{cosa}

\begin{theorem}\label{claslocalizantesdga}
Let $A$ be a point generated bounded commutative non positive DG-algebra. The maps
\[
  \big\{\text{ Localizing subcategories of\/ }\D(A)\, \big\}\ \overset{\CL_A}{\underset{\CV_A}{\longleftrightarrows}} \ \mathcal{P}(\spec(\Ho^0(A)))
\]
defined by
\begin{align*}
\SL\longmapsto& \CV_A(\SL) = \big\{\ip\in\spec(\Ho^0(A))\ |\ i_{\ip*}k(\ip)\in\SL \big\}\\
V\longmapsto  &\CL_A(V)      = \loc_A\big(\big\{i_{\ip*}k(\ip)\ |\ \ip\in V\big\}\big)
\end{align*}
are inverse to each other.   
\end{theorem}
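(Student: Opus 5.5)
We have to show that both composites are the identity: $\CL_A\circ\CV_A$ on localizing subcategories and $\CV_A\circ\CL_A$ on subsets of $\spec(\Ho^0(A))$.

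\textbf{First composite.} Let $\SL$ be a localizing subcategory of $\D(A)$. By definition,
\[
\CL_A(\CV_A(\SL))=\loc_A\big(\big\{i_{\ip*}k(\ip)\ |\ i_{\ip*}k(\ip)\in\SL\big\}\big).
\]
Proposition \ref{5210} says exactly that this equals $\SL$. This is where the point generation hypothesis is used.

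\textbf{Second composite.} Fix $V\subset\spec(\Ho^0(A))$. The inclusion $V\subset\CV_A(\CL_A(V))$ is clear, since $i_{\ip*}k(\ip)\in\CL_A(V)$ for every $\ip\in V$.

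For the reverse inclusion, take $\iq\notin V$. Consider the full subcategory
\[
\SD:=\big\{M\in\D(A)\ |\ i_{\iq*}k(\iq)\otimes^{\LL}_A M=0\big\}.
\]

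1. $\SD$ is localizing. The functor $i_{\iq*}k(\iq)\otimes^{\LL}_A-$ is triangulated and commutes with coproducts, so its kernel is closed under shifts, cones and coproducts.

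2. $\SD$ contains every generator of $\CL_A(V)$. By Lemma \ref{tensorhomvanishesdga}, (i)$\imp$(ii), we have $i_{\iq*}k(\iq)\otimes^{\LL}_A i_{\ip*}k(\ip)=0$ for every $\ip\in V$, because $\ip\neq\iq$.

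3. Hence $\CL_A(V)\subset\SD$.

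4. Suppose $i_{\iq*}k(\iq)\in\CL_A(V)$. Then $i_{\iq*}k(\iq)\otimes^{\LL}_A i_{\iq*}k(\iq)=0$, which says $\iq\notin\ssupp_A(i_{\iq*}k(\iq))$. This contradicts Lemma \ref{soportekp}.

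5. Therefore $\iq\notin\CV_A(\CL_A(V))$, which gives $\CV_A(\CL_A(V))=V$.

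Alternatively, one can argue with Lemma \ref{tensorcero}: if $i_{\iq*}k(\iq)$ lay in $\CL_A(V)$, some $M\in\CL_A(V)$ would have $\iq\in\ssupp_A(M)$. But the subcategory of objects with support contained in $V$ is localizing and contains the generators, so this is impossible.

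\textbf{Expected obstacle.} The only delicate point is the nonvanishing $i_{\iq*}k(\iq)\otimes^{\LL}_A i_{\iq*}k(\iq)\neq0$. This is packaged in Lemma \ref{soportekp}, and it rests on $\LL i_{\iq}^*i_{\iq*}k(\iq)\neq0$ via the projection formula. If needed, I would justify that fact directly: $\Ho^0$ of the derived tensor product $k(\iq)\otimes^{\LL}_A k(\iq)$ is $k(\iq)\otimes_{\Ho^0(A)}k(\iq)=k(\iq)$, since $A$ is non positive.
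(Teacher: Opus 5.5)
Your proof is correct and follows the paper's own argument. The first composite comes from Proposition~\ref{5210}. For the second, you show that $\CL_A(V)$ lies in the localizing kernel of $i_{\iq*}k(\iq)\otimes^{\LL}_A-$ while $i_{\iq*}k(\iq)$ does not; this is exactly what the paper's terse appeal to Lemmas~\ref{tensorhomvanishesdga} and~\ref{tensorcero} amounts to, and you have simply made explicit the localizing-subcategory step the paper leaves implicit.
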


\begin{proof}
Denote $\CV=\CV_A$ and  $\CL=\CL_A$. By Proposition \ref{5210}, $\CL\circ\CV(\SL)=\SL$ for any localizing subcategory $\SL$ of $\D(A)$.

Given $V\subset\spec(\Ho^0(A))$, it is clear that $V\subseteq\CV\circ\CL(V)$. Let us check the reverse inclusion. Take a prime ideal $\iq$ of $\Ho^0(A)$ such that $\iq\notin V$. By Lemma \ref{tensorhomvanishesdga} $i_{\ip*}k(\ip)\otimes^{\LL}_A i_{\iq*}k(\iq)=0$ for any $\ip$ prime in $\Ho^0(A)$ such that $\ip \neq \iq$. Using Lemma \ref{tensorcero} we see that $i_{\iq*}k(\iq)$ does not belong to $\loc_A(\{i_{\ip*}k(\ip)\ |\ \ip\in V\}),$ therefore $\iq\notin\CV\circ\CL(V)$. Thus, $\CV\circ\CL(V)\subseteq V$, as wanted.
\end{proof}

\begin{remark*}
By Theorem \ref{LocK(x)generan} the theorem above holds whenever $A$ is a bounded commutative non positive DG-algebra with $\Ho^0(A)$ being Noetherian, in view of Theorem \ref{k(p)generan}.

This gives also an alternative proof of \cite[Theorem 2.8.]{Nct} when $A$ is an ordinary Noetherian ring.
\end{remark*}

The previous result can be restated in terms of supports, as defined in \ref{defco_sup}.

\begin{corollary}\label{clasificacionlocalizantes2}
Let $I$ be a class of objects of $\D(A)$ and $\SL = \loc(\{M \,|\, M \in I\})$ then,
\begin{itemize}
    \item[(1)] $\CV_A(\SL) = \bigcup_{M \in I}  \ssupp_A(M)$.
    \item[(2)] $N \in \SL \iff \ssupp_A(N) \subset \CV_A(\SL)$.
\end{itemize}
\end{corollary}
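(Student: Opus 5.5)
Implicitly $A$ is a point generated bounded commutative non positive DG-algebra, so Theorem \ref{claslocalizantesdga}, Lemma \ref{tensorcero} and Proposition \ref{5210} are available. The plan is to derive both items from these three results, using that every localizing subcategory is a $\otimes$-ideal (Lemma \ref{tensoridealDGA}).

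For (1), I will prove the two inclusions. First take $M \in I$ and $\ip \in \ssupp_A(M)$. Since $M \in \SL$, Lemma \ref{tensorcero} gives $i_{\ip*}k(\ip) \in \SL$, that is, $\ip \in \CV_A(\SL)$. Conversely, let $\ip \in \CV_A(\SL)$ and suppose $\ip \notin \ssupp_A(M)$ for every $M \in I$. Consider the full subcategory
\[
\SD := \{X \in \D(A) \mid i_{\ip*}k(\ip)\otimes^{\LL}_A X = 0\}.
\]
It is localizing, because $i_{\ip*}k(\ip)\otimes^{\LL}_A -$ is triangulated and preserves coproducts. It contains $I$ by assumption, so it contains $\SL$. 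In particular $i_{\ip*}k(\ip) \in \SD$, which says $\ip \notin \ssupp_A(i_{\ip*}k(\ip))$. This contradicts Lemma \ref{soportekp}, and so $\ip$ lies in some $\ssupp_A(M)$.

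For (2), the forward direction is (1) applied to the class $I \cup \{N\}$. Since $N \in \SL$, this class still generates $\SL$, so $\ssupp_A(N) \subset \CV_A(\SL)$. For the reverse direction, assume $\ssupp_A(N) \subset \CV_A(\SL)$. Let $W := \ssupp_A(N)$ and $\SL' := \loc_A(N)$. By (1), $\CV_A(\SL') = W \subset \CV_A(\SL)$. Theorem \ref{claslocalizantesdga} then gives
\[
\SL' = \CL_A(W) \subset \CL_A(\CV_A(\SL)) = \SL,
\]
where the inclusion holds because $\CL_A$ is visibly monotone. Hence $N \in \SL$.

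None of the steps is really hard. The only place needing care is the reverse inclusion in (1), where one has to spot that the annihilator $\SD$ of $i_{\ip*}k(\ip)$ is a localizing subcategory. A second point is that (2) relies on point generation through Theorem \ref{claslocalizantesdga}. So the standing hypotheses on $A$ must be those of that theorem, and the class $I$ should generate a set-sized localizing subcategory, which is automatic here since everything is determined by subsets of $\spec(\Ho^0(A))$.
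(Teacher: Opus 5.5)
Your proof is correct. For (2) it is the paper's argument: both reduce $N\in\SL$ to $\loc_A(N)\subset\SL$, use (1) to get $\CV_A(\loc_A(N))=\ssupp_A(N)$, and conclude because $\CV_A$ and $\CL_A$ are mutually inverse, inclusion-preserving bijections. You make the monotonicity of $\CL_A$ explicit; the paper leaves it tacit.

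For (1) your route is genuinely different. The paper sets $V_I=\bigcup_{M\in I}\ssupp_A(M)$, asserts $M\in\CL_A(V_I)$ for every $M\in I$, deduces $\CL_A(V_I)=\SL$, and reads off $V_I=\CV_A(\SL)$ from Theorem~\ref{claslocalizantesdga}. The step $M\in\CL_A(\ssupp_A(M))$ is not justified there, and it is itself a point-generation statement: for $M=A$ it says exactly that $A$ is point generated. It would need an argument like the proof of Proposition~\ref{5210}.

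You instead prove the two inclusions directly. One comes from Lemma~\ref{tensorcero}. The other comes from observing that the kernel of $i_{\ip*}k(\ip)\otimes^{\LL}_A-$ is a localizing subcategory, combined with Lemma~\ref{soportekp}. This uses neither point generation nor the classification theorem. So your argument shows that (1) and the forward implication of (2) hold for every commutative non positive DG-algebra, and point generation enters only in the reverse implication of (2).

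That is exactly where it must enter. Take $I=\{i_{\ip*}k(\ip)\mid \ip\in\spec(\Ho^0(A))\}$; then the reverse implication of (2) says precisely that $A$ is point generated. In exchange, the paper's route produces $\SL=\CL_A(V_I)$ in one stroke, though this also follows from your (1) together with Proposition~\ref{5210}.
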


\begin{proof}
We begin proving (1). Let 
\[
V_I = \{\ip \in  \spec(\Ho^0(A) \,|\, \ip \in \ssupp_A(M) \text{ with } M \in I\}
\]
For any $M \in I$, $M \in \CL_A(V_I)$, therefore $\CL_A(V_I) = \SL$. By Theorem \ref{claslocalizantesdga}, $V_I = \CV_A(\SL)$.

Let us address (2). It follows from (1) that $\CV_A(\SL_N) = \ssupp_A(N)$ where $\SL_N := \loc_A(N)$. The subcategory $\SL$ is localizing, therefore $N \in \SL$ if, and only if, $\SL_N \subset \SL$. By Theorem \ref{claslocalizantesdga}, this gives $\ssupp_A(N) = \CV_A(\SL_N) \subset \CV_A(\SL)$.
\end{proof}

\begin{lema}\label{soporteasterisco}
Let $A$ be a bounded commutative non positive DG-algebra, for any DG-module $M\in\D(A)$ it holds that
    \[
    \ssupp_A(M) = \ssupp_{\Ho^0(A)}(\LL r^*M).
    \]
\end{lema}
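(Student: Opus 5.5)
The plan is to compare the two tensor products defining the supports through the factorization $i_{\ip} = j_{\ip}\circ r$ from \eqref{goodnotat}, using pseudofunctoriality and the projection formula \ref{forproyeccióndga}. Fix $\ip\in\spec(\Ho^0(A))$ and $M\in\D(A)$. By definition, $\ip\in\ssupp_A(M)$ if and only if $i_{\ip*}k(\ip)\otimes^{\LL}_A M\neq 0$. Also $\ip\in\ssupp_{\Ho^0(A)}(\LL r^*M)$ if and only if $j_{\ip*}k(\ip)\otimes^{\LL}_{\Ho^0(A)}\LL r^*M\neq 0$. So it suffices to exhibit an isomorphism
\[
r_*\big(j_{\ip*}k(\ip)\otimes^{\LL}_{\Ho^0(A)}\LL r^*M\big)\cong i_{\ip*}k(\ip)\otimes^{\LL}_A M
\]
and to observe that $r_*$ reflects zero objects.

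The key steps are as follows.

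\textbf{First,} pseudofunctoriality of $(-)_*$ gives $i_{\ip*}\cong r_*j_{\ip*}$. Hence $i_{\ip*}k(\ip)\cong r_*\big(j_{\ip*}k(\ip)\big)$.

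\textbf{Second,} apply the projection formula \ref{forproyeccióndga} to $r\colon A\to\Ho^0(A)$, with $N=j_{\ip*}k(\ip)\in\D(\Ho^0(A))$. This gives
\[
r_*(j_{\ip*}k(\ip))\otimes^{\LL}_A M\cong r_*\big(j_{\ip*}k(\ip)\otimes^{\LL}_{\Ho^0(A)}\LL r^*M\big).
\]
Combining this with the first step yields the displayed isomorphism.

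\textbf{Third,} the forgetful functor $r_*$ does not change the underlying complex. A DG-module is zero in the derived category exactly when its homology vanishes, so $r_*X=0$ if and only if $X=0$.

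Chaining these gives the equivalence $\ip\in\ssupp_A(M)\iff\ip\in\ssupp_{\Ho^0(A)}(\LL r^*M)$ for every prime $\ip$, which is the claimed equality of sets.

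I expect little resistance. The only delicate point is that the projection formula \ref{forproyeccióndga} is stated for morphisms in $\DGA^{\leq 0}$, and $\Ho^0(A)$ must be viewed as a DG-algebra concentrated in degree $0$; it is non positive, so the formula applies. If the hypothesis is a concern, one can verify it directly. Take a K-projective (K-flat) resolution $P\to M$. Then $\LL r^*M=P\otimes_A\Ho^0(A)$, and the associativity isomorphism
\[
N\otimes_{\Ho^0(A)}(\Ho^0(A)\otimes_A P)\cong N\otimes_A P
\]
holds on the nose at the level of DG-modules. This recovers the isomorphism of the second step without invoking \ref{forproyeccióndga}.
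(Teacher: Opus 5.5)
Your proposal is correct and is essentially the paper's own proof: factor $i_{\ip*}\cong r_*j_{\ip*}$, apply the projection formula for $r\colon A\to\Ho^0(A)$, and conclude from the conservativity of $r_*$. Your optional direct check via a K-projective resolution $P\to M$ is a valid supplement, provided you note that $\Ho^0(A)\otimes_A P$ is K-projective over $\Ho^0(A)$, so the underived tensor product there computes the derived one.
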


\begin{proof}
For $\ip\in\spec(\Ho^0(A))$, it is enough to apply the projection isomorphism 
   \[
   M\otimes_A^{\LL}i_{\ip*}k(\ip)= M\otimes_A^{\LL}r_*j_{\ip*}k(\ip)\cong r_*(\LL r^*M\otimes_{\Ho^0(A)}^{\LL} j_{\ip*}k(\ip)),
    \]
to realize the claimed identity, because $r_*$ is a conservative functor.
\end{proof}

\begin{theorem} \label{localizantesarribaabajo} 
Let $A$ be a bounded commutative non positive DG-algebra such that $\Ho^0(A)$ is point generated. The correspondences

\begin{equation*}
\begin{tikzpicture}[baseline=(current  bounding  box.center)]
\matrix(m)[matrix of math nodes, row sep=3.5em, column sep=6em,
text height=1.5ex, text depth=0.25ex]{ \small
\left\{\begin{gathered}    
        \text{Localizing}\\
        \text{subcategories of\/ } \D(A)
    \end{gathered}\right\} & \small
\left\{\begin{gathered}    
        \text{Localizing}\\
        \text{subcategories of\/ } \D(\Ho^0(A))
    \end{gathered}\right\}  \\
  };
\draw [transform canvas={yshift= 0.4ex},font=\scriptsize,->]
(m-1-1) -- node[above]{$\sigma$} (m-1-2);
\draw [transform canvas={yshift=-0.4ex},font=\scriptsize,->]
(m-1-2) -- node[below]{$\rho$} (m-1-1);
\end{tikzpicture}
\end{equation*}
defined for any localizing subcategory $\SL_A$ of\/ $\D(A)$ and $\SL_{\Ho^0(A)}$ of\/ $\D(\Ho^0(A))$ by
\begin{align*}
    \sigma(\SL_A)&=\loc_{\Ho^0(A)}(\{\LL r^*M\ |\ M\in\SL_A\})\\
    \rho(\SL_{\Ho^0(A)})&=\loc_A(\{r_*N\ |\ N\in\SL_{\Ho^0(A)}\})
\end{align*}
are mutually inverse.
\end{theorem}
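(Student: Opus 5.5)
Both sides are classified by subsets of $\spec(\Ho^0(A))$. By Theorem \ref{k(p)generan}, $A$ is point generated, so Theorem \ref{claslocalizantesdga} applies to $A$. Applied to the ordinary ring $\Ho^0(A)$ (a DG-algebra concentrated in degree $0$, with $r$ the identity), it gives the bijections $\CV_A,\CL_A$ and $\CV_{\Ho^0(A)},\CL_{\Ho^0(A)}$. The plan is to show that under these bijections $\sigma$ and $\rho$ both become the identity of $\mathcal P(\spec(\Ho^0(A)))$. That is,
\[
\CV_{\Ho^0(A)}(\sigma(\SL_A))=\CV_A(\SL_A), \qquad \CV_A(\rho(\SL_{\Ho^0(A)}))=\CV_{\Ho^0(A)}(\SL_{\Ho^0(A)}).
\]
Mutual inverseness then follows formally: $\rho\sigma(\SL_A)$ has the same $\CV_A$ as $\SL_A$, hence equals it, and symmetrically.

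For $\sigma$, Corollary \ref{clasificacionlocalizantes2}(1) applied in $\D(\Ho^0(A))$ with generating class $\{\LL r^*M \mid M\in\SL_A\}$ gives
\[
\CV_{\Ho^0(A)}(\sigma(\SL_A))=\bigcup_{M\in\SL_A}\ssupp_{\Ho^0(A)}(\LL r^*M).
\]
By Lemma \ref{soporteasterisco} this equals $\bigcup_{M\in\SL_A}\ssupp_A(M)$, which is $\CV_A(\SL_A)$ by the same corollary in $\D(A)$ with $I=\SL_A$.

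For $\rho$, Corollary \ref{clasificacionlocalizantes2}(1) gives
\[
\CV_A(\rho(\SL_{\Ho^0(A)}))=\bigcup_{N\in\SL_{\Ho^0(A)}}\ssupp_A(r_*N),
\]
so I need $\ssupp_A(r_*N)=\ssupp_{\Ho^0(A)}(N)$. The projection formula \ref{forproyeccióndga} yields
\[
r_*N\otimes^{\LL}_A i_{\ip*}k(\ip)=r_*N\otimes^{\LL}_A r_*j_{\ip*}k(\ip)\cong r_*\bigl(N\otimes^{\LL}_{\Ho^0(A)}\LL r^*r_*j_{\ip*}k(\ip)\bigr).
\]
The factor $\LL r^*r_*$ is not the identity, so this does not directly give the support of $N$. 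This identification is the main obstacle.

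I would bypass it using generators instead. By Theorem \ref{claslocalizantesdga} for $\Ho^0(A)$, we have $\SL_{\Ho^0(A)}=\loc_{\Ho^0(A)}(\{j_{\ip*}k(\ip)\mid \ip\in V\})$ with $V=\CV_{\Ho^0(A)}(\SL_{\Ho^0(A)})$. Lemma \ref{restriccionlocalizantes} shows $r_*N\in\loc_A(\{r_*j_{\ip*}k(\ip)\mid\ip\in V\})$ for every $N\in\SL_{\Ho^0(A)}$. Hence
\[
\rho(\SL_{\Ho^0(A)})=\loc_A(\{i_{\ip*}k(\ip)\mid\ip\in V\})=\CL_A(V),
\]
the reverse inclusion being clear since $i_{\ip*}k(\ip)=r_*j_{\ip*}k(\ip)$. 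Therefore $\CV_A(\rho(\SL_{\Ho^0(A)}))=V$, with no support computation for $r_*N$ needed.

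Symmetrically, one could check $\sigma$ via generators, but $\LL r^* i_{\ip*}k(\ip)$ is not simply $j_{\ip*}k(\ip)$; the support argument above through Lemma \ref{soporteasterisco} avoids that issue, so I would use supports for $\sigma$ and generators for $\rho$.
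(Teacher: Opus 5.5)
Your proposal is correct and takes essentially the same route as the paper. The $\sigma$ step (Corollary \ref{clasificacionlocalizantes2}(1) combined with Lemma \ref{soporteasterisco}) is the paper's argument almost verbatim, and your generator argument for $\rho$ via Lemma \ref{restriccionlocalizantes} supplies exactly the justification behind the paper's terse claim that $\rho=\CL_A\circ\CV_{\Ho^0(A)}$. You were also right not to compute $\ssupp_A(r_*N)$ directly, since the paper only obtains $\ssupp_A(r_*N)=\ssupp_{\Ho^0(A)}(N)$ afterwards, as Corollary \ref{soportepequeno} of this theorem.
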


\begin{proof}
Let us complete the diagram in the statement with the correspondences given by Theorem \ref{claslocalizantesdga} for both algebras $A$ and $\Ho^0(A)$

\begin{equation*}
\begin{tikzpicture}[baseline=(current  bounding  box.center)]
\matrix(m)[matrix of math nodes, row sep=3.5em, column sep=1em,
text height=1.5ex, text depth=0.25ex]{ \small  
\left\{\begin{gathered}
        \text{Localizing}\\
        \text{subcategories of\/ } \D(A)
    \end{gathered}\right\} & & \small
\left\{\begin{gathered}    
        \text{Localizing}\\
        \text{subcategories of\/ } \D(\Ho^0(A))
    \end{gathered}\right\}  \\
 & \CP(\spec(\Ho^0(A)) &\\
  };
\draw [transform canvas={yshift= 0.4ex},font=\scriptsize,->]
(m-1-1) -- node[above]{$\sigma$} (m-1-3);
\draw [transform canvas={yshift=-0.4ex},font=\scriptsize,->]
(m-1-3) -- node[below]{$\rho$} (m-1-1);
\draw [shorten <=1.1cm, transform canvas={yshift= 0.4ex},font=\scriptsize,->]
(m-1-1) -- node[above, near end]{$\CV_A$} (m-2-2);
\draw [shorten >=1.1cm, shorten <=0.2cm, transform canvas={yshift=-0.4ex},font=\scriptsize,->]
(m-2-2) -- node[below, pos=0.4]{$\CL_A$} (m-1-1);
\draw [shorten <=1.1cm, transform canvas={yshift= 0.4ex},font=\scriptsize,->]
(m-1-3) -- node[above, near end]{$\CV_{\Ho^0(A)}$\,\,\,\,} (m-2-2);
\draw [shorten >=1.1cm, shorten <=0.2cm, transform canvas={yshift=-0.4ex},font=\scriptsize,->]
(m-2-2) -- node[below, pos=0.4]{\quad $\CL_{\Ho^0(A)}$} (m-1-3);
\end{tikzpicture}
\end{equation*}

By Theorem \ref{claslocalizantesdga} and its Corollary \ref{clasificacionlocalizantes2}, $\rho = \CL_A \circ \CV_{\Ho^0(A)}$ thus $\rho$ is a bijection.

We are left to prove that $\sigma$ is the inverse of $\rho$ and for this end, it is enough to see that $\CV_A(\SL_A) = \CV_{\Ho^0(A)}(\sigma(\SL_A))$. By Lemma \ref{soporteasterisco}
\[
\CV_A(\SL_A) 
= \bigcup_{M\in\SL_A}\ssupp_A(M)
= \bigcup_{M\in\SL_A}\ssupp_{\Ho^0(A)}(\LL r^* M)
\]
Finally, by Corollary \ref{clasificacionlocalizantes2}
\[
\bigcup_{M\in\SL_A}\ssupp_{\Ho^0(A)}(\LL r^*M)
= \bigcup_{N\in\sigma(\SL_A)}\ssupp_{\Ho^0(A)}(N)
= \CV_{\Ho^0(A)}(\sigma(\SL_A))
\] 
and this concludes the proof
\end{proof}

As a consequence, we see that $r_*$ preserves supports.

\begin{corollary}\label{soportepequeno}
    For every $N\in\D(\Ho^0(A))$ it holds that
    \[
    \ssupp_{\Ho^0(A)}(N)=\ssupp_A(r_*(N)).
    \]
\end{corollary}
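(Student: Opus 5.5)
Fix $N\in\D(\Ho^0(A))$ and $\ip\in\spec(\Ho^0(A))$. The idea is to compute $i_{\ip*}k(\ip)\otimes_A^{\LL} r_*N$ directly and compare it with $j_{\ip*}k(\ip)\otimes^{\LL}_{\Ho^0(A)} N$. Both are pushed forward from $\D(\Ho^0(A))$, and $r_*$ is conservative, so the comparison only has to be made there. The proof of Lemma \ref{soporteasterisco} already used these ingredients.

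First rewrite the left side with the projection formula \ref{forproyeccióndga} for $r$:
\[
i_{\ip*}k(\ip)\otimes_A^{\LL} r_*N \cong r_*N\otimes_A^{\LL} r_* j_{\ip*}k(\ip)\cong r_*\big(N\otimes^{\LL}_{\Ho^0(A)} \LL r^* r_* j_{\ip*}k(\ip)\big).
\]
Using Lemma \ref{soporteasterisco} with $M=r_*N$ gives the same reduction:
\[
\ssupp_A(r_*N)=\ssupp_{\Ho^0(A)}(\LL r^* r_* N).
\]
It therefore suffices to show
\[
\ssupp_{\Ho^0(A)}(\LL r^* r_*N)=\ssupp_{\Ho^0(A)}(N).
\]

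For the inclusion $\supseteq$, use the counit $\LL r^* r_* N\to N$. By the triangle identities, $r_*$ of the counit has a section, namely the unit at $r_*N$. Tensoring over $A$ with $i_{\ip*}k(\ip)$ and applying the projection formula, $N\otimes j_{\ip*}k(\ip)$ becomes, after $r_*$, a direct summand of $r_*(\LL r^*r_*N\otimes j_{\ip*}k(\ip))$. Since $r_*$ is conservative, any $\ip$ in $\ssupp(N)$ lies in $\ssupp(\LL r^*r_*N)$.

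For the inclusion $\subseteq$, use the classification instead of computing. Put $\SL=\loc_{\Ho^0(A)}(N)$. By Corollary \ref{clasificacionlocalizantes2}, $\SL$ consists of the objects whose support lies in $\ssupp(N)$. By Theorem \ref{localizantesarribaabajo}, $\sigma\rho(\SL)=\SL$. Now $\LL r^*r_*N$ is one of the generators of $\sigma(\rho(\SL))$, because $r_*N\in\rho(\SL)$. Hence $\LL r^*r_*N\in\SL$, and its support is contained in $\ssupp(N)$.

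Alternatively, one can argue entirely with the classification. We have $\rho(\SL)=\loc_A(r_*N)$, since $\loc_A(\{r_*N'\mid N'\in\SL\})$ is generated by $r_*N$ by Lemma \ref{restriccionlocalizantes}. The proof of Theorem \ref{localizantesarribaabajo} gives $\rho=\CL_A\circ\CV_{\Ho^0(A)}$, so
\[
\CV_A(\loc_A(r_*N))=\CV_{\Ho^0(A)}(\SL).
\]
By Corollary \ref{clasificacionlocalizantes2}(1), the left side equals $\ssupp_A(r_*N)$ and the right side equals $\ssupp_{\Ho^0(A)}(N)$. This gives the equality directly.

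The main obstacle is checking that $\loc_A(\{r_*N'\mid N'\in\SL\})=\loc_A(r_*N)$. The inclusion $\supseteq$ is clear. The inclusion $\subseteq$ is exactly Lemma \ref{restriccionlocalizantes} applied to $N'\in\loc_{\Ho^0(A)}(N)$. Both Corollary \ref{clasificacionlocalizantes2} and Theorem \ref{localizantesarribaabajo} require point generation, which we assume implicitly: $\Ho^0(A)$ is point generated, and $A$ is then point generated by Theorem \ref{k(p)generan}.
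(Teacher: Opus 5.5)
Your second argument (the one introduced with ``alternatively'') is correct. It is essentially the derivation the paper leaves implicit when it states the corollary as a consequence of Theorem~\ref{localizantesarribaabajo}. Its steps are: $\rho(\loc_{\Ho^0(A)}(N))=\loc_A(r_*N)$ by Lemma~\ref{restriccionlocalizantes}; then $\rho=\CL_A\circ\CV_{\Ho^0(A)}$ and $\CV_A\circ\CL_A=\mathrm{id}$; then Corollary~\ref{clasificacionlocalizantes2}(1) on both sides. You are also right that point generation of $\Ho^0(A)$ (hence of $A$, by Theorem~\ref{k(p)generan}) is tacitly assumed.

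Your first route, however, has a wrong step in the inclusion $\ssupp_{\Ho^0(A)}(N)\subseteq\ssupp_{\Ho^0(A)}(\LL r^*r_*N)$.
\begin{itemize}
\item Tensor the split epimorphism $r_*\varepsilon_N\colon r_*\LL r^*r_*N\to r_*N$ with $i_{\ip*}k(\ip)=r_*j_{\ip*}k(\ip)$ and apply the projection formula. For $X=N$ and $X=\LL r^*r_*N$ you get $r_*\big(X\otimes^{\LL}_{\Ho^0(A)}\LL r^*r_*j_{\ip*}k(\ip)\big)$, exactly as in your own first display. You do not get $r_*(X\otimes^{\LL}_{\Ho^0(A)}j_{\ip*}k(\ip))$.
\item The counit $\LL r^*r_*k(\ip)\to k(\ip)$ is not an isomorphism in general. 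For example, take $A=k[\epsilon]$ over a field $k$, with $\epsilon$ of degree $-1$ and $\epsilon^2=0$; then $k\otimes^{\LL}_A k$ has homology in every even non-positive degree.
\item What the splitting really gives is that $r_*N\otimes^{\LL}_A i_{\ip*}k(\ip)$ is a summand of $r_*\LL r^*r_*N\otimes^{\LL}_A i_{\ip*}k(\ip)$. But the nonvanishing of the former is precisely $\ip\in\ssupp_A(r_*N)$, which is what you are trying to prove.
\end{itemize}
So the step is unjustified as written, and circular once corrected.

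If you want to keep this route, get both inclusions from the classification. Since $\LL r^*$ is triangulated and preserves coproducts, $\sigma(\loc_A(r_*N))=\loc_{\Ho^0(A)}(\LL r^*r_*N)$. Then $\sigma\rho=\mathrm{id}$ yields $\loc_{\Ho^0(A)}(\LL r^*r_*N)=\loc_{\Ho^0(A)}(N)$, and Corollary~\ref{clasificacionlocalizantes2} gives equal supports. Otherwise simply drop the first route, since the second one already proves the corollary.
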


\section{Classifying colocalizing subcategories}\label{clascoloc}

In the vein of Definition \ref{defcoloc}, given a family of objects $S\subset\D(A)$ we will denote by $\col_A(S)$ the smallest colocalizing subcategory of $\D(A)$ that contains $S$. 

\begin{lema}
    Let $A$ be a DG-algebra  and let $\SC$ be a colocalizing subcategory of $\D(A).$ For any  $N\in\SC$ and $M\in\D(A)$, it holds that $\rhom^\bullet_A(M,N)\in \SC$.
\end{lema}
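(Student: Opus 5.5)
The plan mirrors the proof of Lemma \ref{tensoridealDGA}, with $\rhom^\bullet_A(-,N)$ in place of $-\otimes^{\LL}_A N$. Fix the colocalizing subcategory $\SC$ and define
\[
\SD:=\big\{ M\in \D(A)\ |\ \rhom^\bullet_A(M,N)\in\SC,\ \forall N\in\SC\big\}.
\]
We will show that $\SD$ is a localizing subcategory of $\D(A)$ containing $A$. Corollary \ref{compactamentegeneradacor} then gives $\SD=\D(A)$, which is the statement.

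First, $A\in\SD$, because $\rhom^\bullet_A(A,N)\cong N$ naturally in $N$. Since $\SC$ is closed under isomorphisms, $\rhom^\bullet_A(A,N)\in\SC$ for every $N\in\SC$.

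Next, $\SD$ is a triangulated subcategory. The functor $\rhom^\bullet_A(-,N)\colon\D(A)^{\mathrm{op}}\to\D(A)$ is triangulated, so it commutes with shifts up to a sign and a change of direction. Moreover, a distinguished triangle $M'\to M\to M''\overset{+}\to$ is sent to a distinguished triangle
\[
\rhom^\bullet_A(M'',N)\lto\rhom^\bullet_A(M,N)\lto\rhom^\bullet_A(M',N)\overset{+}\lto.
\]
If two of $M',M,M''$ lie in $\SD$, then two vertices of this triangle lie in $\SC$, and so does the third because $\SC$ is triangulated. Finally, $\SD$ is closed under coproducts because of the canonical isomorphism
\[
\rhom^\bullet_A\Big(\textstyle\bigoplus_i M_i,N\Big)\cong\prod_i\rhom^\bullet_A(M_i,N),
\]
together with the fact that $\SC$ is closed under products.

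The only step needing care is this last isomorphism in $\D(A)$, that is, that $\rhom^\bullet_A$ turns coproducts into products. To check it, choose a K-injective resolution $N\to I$ (Theorem \ref{existenciaresolucionesDGA}). Then $\rhom^\bullet_A(-,N)=\Hom^\bullet_A(-,I)$ on $\K(A)$. The coproduct in $\D(A)$ is computed by the degreewise direct sum of DG-modules, and at the level of graded modules $\Hom^\bullet_A(\bigoplus_i M_i,I)=\prod_i\Hom^\bullet_A(M_i,I)$ compatibly with differentials. The remaining point is that the degreewise product of DG-modules represents the product in $\D(A)$. This holds because products are exact on abelian groups, so the componentwise product preserves quasi-isomorphisms, and because products of K-injectives are K-injective. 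With these ingredients $\SD$ is localizing, contains $A$, and hence equals $\D(A)$.
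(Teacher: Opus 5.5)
Your proposal is correct and is essentially the paper's proof. The paper defines the same subcategory, observes that it is localizing because $\rhom^\bullet_A(-,N)$ turns coproducts into products and $\SC$ is closed under products, notes that it contains $A$, and concludes by compact generation (Proposition \ref{compactamentegenerada}); your argument via K-injective resolutions, showing that the componentwise product computes the product in $\D(A)$, supplies a detail the paper leaves implicit.
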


\begin{proof}
    Let $\SL\subset\D(A)$ be the sucategory
    \[
    \SL:=\{M\in\D(A)\ |\ \rhom^\bullet_A(M,N)\in\SC,\ \forall N\in\SC\}
    \]
   The functor $\rhom^\bullet_A(-,N)$ transforms coproducts into products and $\SC$ is colocalizing, it holds that $\SL$ is a localizing subcategory of $\D(A)$. Moreover, $A\in\SL$, thus by Proposition \ref{compactamentegenerada} it follows that $\SL=\D(A)$.
\end{proof}

\begin{remark}
 This can be restated saying that any localizing subcategory of $\D(A)$ is an $\CH-$coideal.
\end{remark}

\begin{lema}\label{homcero}
    Let $A$ be a commutative DG-algebra, $\SC\subset\D(A)$ a colocalizing subcategory and $\ip\in\spec(\Ho^0(A))$. It holds that, $i_{\ip*}k(\ip)\in\SC$ if, and only if, there is a DG-module $N\in\SC$ such that $\rhom^\bullet_A(i_{\ip*}k(\ip),N)\neq 0$.
    
\end{lema}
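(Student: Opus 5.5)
The plan is to dualize the proof of Lemma \ref{tensorcero}, using the preceding lemma (colocalizing subcategories are closed under $\rhom^\bullet_A(M,-)$ for arbitrary $M$) in place of Lemma \ref{tensoridealDGA}. The direct implication needs no work. If $i_{\ip*}k(\ip)\in\SC$, take $N=i_{\ip*}k(\ip)$. By Lemma \ref{tensorhomvanishesdga} (or Lemma \ref{soportekp}) we have $\rhom^\bullet_A(i_{\ip*}k(\ip),i_{\ip*}k(\ip))\neq 0$.

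For the reverse implication, let $N\in\SC$ with $\rhom^\bullet_A(i_{\ip*}k(\ip),N)\neq 0$. The preceding lemma gives $\rhom^\bullet_A(i_{\ip*}k(\ip),N)\in\SC$. Next apply the internal adjunction (2) following Proposition \ref{adjolvidosuperpor}, with $f=i_\ip$ and $B=k(\ip)$:
\[
\rhom^\bullet_A(i_{\ip*}k(\ip),N)\cong i_{\ip*}\rhom^\bullet_{k(\ip)}(k(\ip),i_\ip^\times N)\cong i_{\ip*}\, i_\ip^\times N .
\]
Since $i_{\ip*}$ is conservative (it is a forgetful functor), the complex $V:=i_\ip^\times N\in\D(k(\ip))$ is nonzero.

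Every complex of $k(\ip)$-vector spaces is quasi-isomorphic to its homology, so $V\cong\bigoplus_n k(\ip)^{(I_n)}[-n]$. Because $V\neq0$, some $I_n$ is nonempty. The key step is to exhibit $k(\ip)[-n]$ as a direct summand of $V$ in $\D(k(\ip))$, and then pass this along $i_{\ip*}$, which is additive. Doing so shows that $i_{\ip*}k(\ip)[-n]$ is a direct summand of $i_{\ip*}V\in\SC$. Colocalizing subcategories are thick (Definition \ref{defcoloc}) and closed under shifts, so $i_{\ip*}k(\ip)\in\SC$.

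I expect the main obstacle to be this last step, because the dual argument differs slightly from that of Lemma \ref{tensorcero}. There, $\SL$ was closed under coproducts, and $V$ is naturally a coproduct. Here $\SC$ is only closed under products, so I would rely solely on thickness, which needs no closure under infinite operations: a single summand splits off directly in $\D(k(\ip))$. If one prefers, $V$ can also be written as a product $\prod_n H^n(V)[-n]$, since over a field products and coproducts of shifts agree in the derived category; this keeps the argument visibly in the colocalizing spirit. The remaining checks are routine: $i_{\ip*}$ commutes with shifts and preserves direct sums and summands.
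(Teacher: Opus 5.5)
Your proposal is correct and follows essentially the same route as the paper: the preceding lemma puts $\rhom^\bullet_A(i_{\ip*}k(\ip),N)$ in $\SC$, the internal adjunction identifies it with $i_{\ip*}i_{\ip}^\times N$, and a shifted copy of $k(\ip)$ splits off as a direct summand of the nonzero complex $i_{\ip}^\times N$, so thickness gives $i_{\ip*}k(\ip)\in\SC$. Your observation that this last step needs only additivity of $i_{\ip*}$, not preservation of coproducts as the paper invokes, is a small sharpening.
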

\begin{proof}
   If $i_{\ip*}k(\ip)\in\SC$ it is enough to take $N=i_{\ip*}k(\ip)$ (see Lemma \ref{tensorhomvanishesdga}).

    For the reverse implication, let $N\in\SC$ such that $\rhom_A(i_{\ip*}k(\ip),N)\neq 0$. By the previous lemma, it holds that $\rhom^\bullet_A(i_{\ip*}k(\ip),N)$ also belongs to $\SC.$ From the natural isomorphism
\[
\rhom^\bullet_A(i_{\ip*}k(\ip),N) \cong 
i_{\ip*}\rhom^\bullet_{k(\ip)}(k(\ip),i_{\ip}^\times N) \cong 
i_{\ip*}i_{\ip}^\times N
\]
We deduce that $i_{\ip}^\times N$ is an acyclic complex of $k(\ip)$-vector spaces. If $i\in\mathbb{N}$ satisfies that $\Ho^i(i_{\ip}^\times N)\neq 0$ then $k(\ip)[-i]$ is a direct summand of $i_{\ip}^\times N$. As $i_{\ip*}$ preserves coproducts and $\SC$ is thick, we conclude that $i_{\ip*}k(\ip)\in\SC.$
\end{proof}

The next proposition is the key point to classify all colocalizing subcategories.

\begin{proposition}
Let $A$ be a point generated bounded commutative non positive DG-algebra. Let $\SC$ be a colocalizing subcategory of\/ $\D(A)$. It holds that
\[
\SC=\col_A(\{ i_{\ip*}k(\ip)\ |\ i_{\ip*}k(\ip)\in\SC\}).
\]
\end{proposition}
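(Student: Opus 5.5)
Set $\widetilde{\SC}:=\col_A(\{ i_{\ip*}k(\ip)\ |\ i_{\ip*}k(\ip)\in\SC\})$. Clearly $\widetilde{\SC}\subseteq\SC$, so the content lies in the reverse inclusion. I would dualize the proof of Proposition \ref{5210}, replacing the tensor product by internal Hom. Consider
\[
\SL:=\big\{ M\in\D(A)\ |\ \rhom^\bullet_A(M,N)\in\widetilde{\SC},\ \forall N\in\SC\big\}.
\]
Since $\rhom^\bullet_A(-,N)$ is triangulated (up to a sign on shifts) and sends coproducts to products, and $\widetilde{\SC}$ is a colocalizing subcategory, $\SL$ is a localizing subcategory of $\D(A)$. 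Once we know $A\in\SL$, we get $N\cong\rhom^\bullet_A(A,N)\in\widetilde{\SC}$ for every $N\in\SC$, and therefore $\SC=\widetilde{\SC}$.

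Because $A$ is point generated, it is enough to show that $i_{\ip*}k(\ip)\in\SL$ for every $\ip\in\spec(\Ho^0(A))$. I split into two cases.

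\textbf{Case $i_{\ip*}k(\ip)\notin\SC$.} Lemma \ref{homcero} gives $\rhom^\bullet_A(i_{\ip*}k(\ip),N)=0$ for all $N\in\SC$. Since $0\in\widetilde{\SC}$, we get $i_{\ip*}k(\ip)\in\SL$.

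\textbf{Case $i_{\ip*}k(\ip)\in\SC$.} Then $i_{\ip*}k(\ip)\in\widetilde{\SC}$, and I need $\rhom^\bullet_A(i_{\ip*}k(\ip),N)\in\widetilde{\SC}$ for $N\in\SC$. The previous lemma (colocalizing subcategories are $\CH$-coideals) gives membership in $\SC$, not in $\widetilde{\SC}$, so a direct argument is needed. This is the main obstacle. I would use the isomorphism from the proof of Lemma \ref{homcero},
\[
\rhom^\bullet_A(i_{\ip*}k(\ip),N)\cong i_{\ip*}i_{\ip}^\times N .
\]
Here $i_{\ip}^\times N$ is a complex of $k(\ip)$-vector spaces, hence isomorphic to its homology: a coproduct of shifts of $k(\ip)$ taken degreewise. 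Within each degree $n$, the homology is a vector space $\Ho^n$ of some dimension $\kappa_n$, and a coproduct $k(\ip)^{(\kappa_n)}$ is not directly a product. I would handle this through the dual space: $\Hom_{k(\ip)}(k(\ip)^{(\kappa_n)},k(\ip))=k(\ip)^{\kappa_n}$ is a product, and $k(\ip)^{(\kappa_n)}$ is a direct summand of its double dual $k(\ip)^{\kappa'}$ for suitable $\kappa'$. So every vector space is a direct summand of a product of copies of $k(\ip)$.

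Moreover, in $\D(k(\ip))$ a complex with zero differential satisfies $\bigoplus_n \Ho^n[-n]\cong\prod_n\Ho^n[-n]$. Hence $i_{\ip}^\times N$ is a direct summand of a product of shifts of $k(\ip)$. The functor $i_{\ip*}$ preserves products, being a right adjoint, and $\widetilde{\SC}$ is thick. Therefore $i_{\ip*}i_{\ip}^\times N\in\widetilde{\SC}$. This settles the second case, so $\SL=\D(A)$ and the proposition follows.
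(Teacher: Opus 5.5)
Your proof is correct and follows essentially the same route as the paper. You use the same auxiliary localizing subcategory $\{M \mid \rhom^\bullet_A(M,N)\in\widetilde{\SC}\ \forall N\in\SC\}$, the same two-case check on residue fields via Lemma \ref{homcero} and $\rhom^\bullet_A(i_{\ip*}k(\ip),N)\cong i_{\ip*}i_{\ip}^\times N$, and point generation to conclude; your direct-summand-of-a-product argument also usefully spells out the step the paper only asserts, namely that $i_{\ip*}i_{\ip}^\times N$ lies in $\col_A(i_{\ip*}k(\ip))$.
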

    
\begin{proof}
Let us denote $\widetilde{\SC}=\col_A(\{i_{\ip*}k(\ip)\ |\ i_{\ip*}k(\ip)\in\SC\})$. The full subcategory $\SD$ of $\D(A)$ defined by
    \[
    \SD=\{M \in\D(A)\ |\ \rhom^\bullet_A(M,N) \in \widetilde{\SC},\ \forall N \in \SC\}
    \]
is localizing, because the triangulated functor $\rhom^\bullet_A(-,N)$ takes coproducts into products and $\widetilde{\SC}$ is colocalizing. Let us see that $i_{\ip*}k(\ip)$ belongs to $\SD$ for any $\ip\in\spec(\Ho^0(A))$. Let $N\in\SC$, using the natural isomorphism
\[
\rhom^\bullet_A(i_{\ip*}k(\ip),N) \cong 
i_{\ip*}\rhom^\bullet_{k(\ip)}(k(\ip),i_{\ip}^\times N) \cong
i_{\ip*}i_{\ip}^\times N
\]
we deduce that $\rhom^\bullet_A(i_{\ip*}k(\ip),N)\in\col_A(i_{\ip*}k(\ip))$ because $i_{\ip}^\times N$ is a complex of $k(\ip)$-vector spaces and the functor $i_{\ip*}$ commutes with products. Therefore, if $i_{\ip*}k(\ip)\in \SC$, then $\rhom^\bullet_A(i_{\ip*}k(\ip),N) \in \widetilde{\SC}$ and so $i_{\ip*}k(\ip)\in\SD$. On the other hand, if $i_{\ip*}k(\ip) \notin \SC$, by the previous lemma, for all $N\in\SC$, $\rhom^\bullet_A(i_{\ip*}k(\ip),N)=0$ so it belongs to $\col_A(\widetilde{\SC})$. We conclude that $i_{\ip*}k(\ip)\in \SD.$

   Finally, we apply Theorem \ref{k(p)generan} to conclude that $\SD=\D(A)$ and, as a consequence, $\SC=\widetilde{\SC}$.
\end{proof}

\begin{theorem}\label{clascolocalizantesdga}
Let $A$ be a point generated bounded commutative non positive DG-algebra. The maps
\[
  \{\text{ Colocalizing subcategory of\/ }\D(A)\}\ \overset{\CC_A}{\underset{\CW_A}{\longleftrightarrows}} \ \mathcal{P}(\spec(\Ho^0(A)))
\]
defined by
\begin{align*}
    \SC\longmapsto&\CW_A(\SC)=\big\{\ip\in\spec(\Ho^0(A))\ |\ i_{\ip*}k(\ip)\in\SC \big\} \\
    W\longmapsto&\CC_A(W)=\col_A\big(\big\{i_{\ip*}k(\ip)\ |\ \ip\in W\big\}\big)
\end{align*}
are inverse to each other.
\end{theorem}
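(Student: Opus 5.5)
The plan is to mirror the proof of Theorem \ref{claslocalizantesdga}, replacing tensor products by $\rhom$ and Lemma \ref{tensorcero} by Lemma \ref{homcero}. Write $\CW=\CW_A$ and $\CC=\CC_A$. Two compositions have to be checked.

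For $\CC\circ\CW(\SC)=\SC$ I would apply the preceding Proposition directly. It says that every colocalizing subcategory $\SC$ of $\D(A)$ equals $\col_A(\{i_{\ip*}k(\ip)\mid i_{\ip*}k(\ip)\in\SC\})$, and the right-hand side is exactly $\CC(\CW(\SC))$.

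For $\CW\circ\CC(W)=W$, fix $W\subset\spec(\Ho^0(A))$. The inclusion $W\subseteq\CW(\CC(W))$ holds by construction, since each generator $i_{\ip*}k(\ip)$ with $\ip\in W$ lies in $\CC(W)$. For the reverse inclusion, take $\iq\notin W$. I will show that $\rhom^\bullet_A(i_{\iq*}k(\iq),N)=0$ for every $N\in\CC(W)$; then Lemma \ref{homcero} gives $i_{\iq*}k(\iq)\notin\CC(W)$, that is, $\iq\notin\CW(\CC(W))$.

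To get this vanishing, consider the full subcategory
\[
\SC':=\{N\in\D(A)\mid \rhom^\bullet_A(i_{\iq*}k(\iq),N)=0\}.
\]
\begin{itemize}
\item[(a)] $\SC'$ is triangulated, because $\rhom^\bullet_A(i_{\iq*}k(\iq),-)$ is a triangulated functor.
\item[(b)] $\SC'$ is closed under products, because this functor commutes with products.
\item[(c)] $\SC'$ contains $i_{\ip*}k(\ip)$ for every $\ip\in W$, by Lemma \ref{tensorhomvanishesdga}, (i)$\Rightarrow$(iii), since $\ip\neq\iq$.
\end{itemize}
Hence $\SC'$ is a colocalizing subcategory containing the generators of $\CC(W)$, so $\CC(W)\subseteq\SC'$, which is the required vanishing.

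There is no substantial obstacle here, since all the work is in the preceding Proposition, which uses point generation via Theorem \ref{k(p)generan}. The only point to watch is the direction of the orthogonality. The argument must use that $\rhom^\bullet_A(i_{\iq*}k(\iq),-)$ commutes with products in its second variable, so that $\SC'$ is colocalizing rather than localizing, and it must use the vanishing $\rhom^\bullet_A(i_{\iq*}k(\iq),i_{\ip*}k(\ip))=0$ with the roles of $\ip$ and $\iq$ exactly as in Lemma \ref{tensorhomvanishesdga}.
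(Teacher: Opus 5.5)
Your proof is correct and follows the same route as the paper. The preceding Proposition gives $\CC_A\circ\CW_A(\SC)=\SC$, and Lemmas \ref{tensorhomvanishesdga} and \ref{homcero} give $\CW_A\circ\CC_A(W)=W$; your colocalizing kernel $\SC'$ of $\rhom^\bullet_A(i_{\iq*}k(\iq),-)$ makes explicit a step the paper's proof leaves implicit, namely that the vanishing on the generators $i_{\ip*}k(\ip)$, $\ip\in W$, extends to all of $\CC_A(W)$ before Lemma \ref{homcero} is applied.
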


\begin{proof}
Denote $\CW=\CW_A$ and $\CC=\CC_A$. Let $\CC$ be a colocalizing subcategory of $\D(A)$. By the previous Proposition it follows that $\CC\circ\CW(\SC)=\SC$.

Suppose $W\subset\spec(\Ho^0(A))$. To see that $\CW\circ\CC(W)=W$, let us make explicit the definition of the left hand side:
    \[
\CW\circ\CC(W) = 
\{\ip\in\spec(\Ho^0(A)\ |\ i_{\ip*}k(\ip)\in\col_A(\{i_{\iq*}k(\iq)\ |\ \iq\in W\})\}
    \]
The inclusion $W\subseteq \CW\circ\CC(W)$ is obvious. Let us check now that $\CW\circ\CC(W)\subseteq W.$ Take $\iq\notin W$. By Lemma \ref{tensorhomvanishesdga}, $\rhom^\bullet_A(i_{\iq*}k(\iq),i_{\ip*}k(\ip))=0$ for all $\ip\in W$. By lemma \ref{homcero} we see that $i_{\iq*}k(\iq)\notin\col_A(\{i_{\ip*}k(\ip)\ |\ \ip\in W\})$, therefore $\rhom^\bullet_A(i_{\iq*}k(\iq),N) = 0$ for all $N \in \col_A(\{i_{\ip*}k(\ip)\ |\ \ip\in W\})$, thus $\iq\notin\CW\circ\CC(W).$
\end{proof}

\begin{remark*}
By Theorem \ref{LocK(x)generan} we see that the theorem holds whenever $A$ is bounded commutative non positive DG-algebra such that $\Ho^0(A)$ is \emph{Noetherian}, in view of Theorem \ref{k(p)generan}.

In particular, this applies when $A$ is an ordinary Noetherian ring, thus we get an alternate and somewhat simpler proof of \cite[Corollary 2.8.]{Ncs}.
\end{remark*}

As in the case of localizing subcategories, the previous result can be restated in terms of cosupports, as defined in \ref{defco_sup}.

\begin{lema}\label{soportesuperpor}
If $A$ is a commutative non positive DG-algebra, then 
    \[
    \cosupp_A(M)=\cosupp_{\Ho^0(A)}(r^\times M),
    \] 
for any $M\in\D(A)$   
\end{lema}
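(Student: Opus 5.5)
The plan is to imitate the proof of Lemma \ref{soporteasterisco}, replacing the projection formula by the internal adjunction for the right adjoint $r^\times$. Fix $\ip\in\spec(\Ho^0(A))$. Since $i_{\ip}=j_{\ip}\circ r$, pseudofunctoriality of $(-)_*$ gives $i_{\ip*}k(\ip)\cong r_*j_{\ip*}k(\ip)$ in $\D(A)$. I would then apply the upgraded internal form (2) of Proposition \ref{adjolvidosuperpor} to $r\colon A\to\Ho^0(A)$, with $N=j_{\ip*}k(\ip)\in\D(\Ho^0(A))$ and $M\in\D(A)$. This yields a natural isomorphism
\[
\rhom^\bullet_A(i_{\ip*}k(\ip),M)\cong\rhom^\bullet_A(r_*j_{\ip*}k(\ip),M)\cong r_*\rhom^\bullet_{\Ho^0(A)}(j_{\ip*}k(\ip),r^\times M).
\]

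Next I would use that $r_*$ is conservative, which is the same fact invoked in Lemma \ref{soporteasterisco}. Indeed, $r_*$ only forgets the module structure and leaves the underlying complex unchanged, so it detects zero objects via homology. Hence the left-hand side vanishes if and only if $\rhom^\bullet_{\Ho^0(A)}(j_{\ip*}k(\ip),r^\times M)=0$. By Definition \ref{defco_sup}, applied to $A$ and to the ordinary ring $\Ho^0(A)$ (where $i_{\ip}=j_{\ip}$), this says exactly that $\ip\in\cosupp_A(M)$ if and only if $\ip\in\cosupp_{\Ho^0(A)}(r^\times M)$. Since $\ip$ was arbitrary, the two cosupports coincide.

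I expect the main obstacle to be justifying the internal adjunction (2). The paper states it only briefly, as a combination of Proposition \ref{adjolvidosuperpor}(2) with the projection formula \ref{forproyeccióndga}. To make the argument self-contained I would verify it by Yoneda. For any $P\in\D(A)$ one has
\[
\Hom(P,\rhom^\bullet_A(r_*N,M))\cong\Hom(P\otimes^{\LL}_A r_*N,M)\cong\Hom(r_*(\LL r^*P\otimes^{\LL}_{\Ho^0(A)}N),M),
\]
using the derived tensor-hom adjunction and then the projection formula. Applying $r_*\dashv r^\times$ and the adjunctions over $\Ho^0(A)$ and $A$ then gives
\[
\Hom(\LL r^*P,\rhom^\bullet_{\Ho^0(A)}(N,r^\times M))\cong\Hom(P,r_*\rhom^\bullet_{\Ho^0(A)}(N,r^\times M)).
\]
Finally, the cosupports are defined through the complexes $\rhom^\bullet$ rather than through $\Hom$-groups. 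So I must use the isomorphism at the level of complexes (or of all shifts), not merely on $\Ho^0$, and I will check that the constructed map is compatible with shifts.
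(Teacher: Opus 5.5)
Your proposal is correct and follows the paper's proof: the internal adjunction for $r_*\dashv r^\times$ gives $r_*\rhom^\bullet_{\Ho^0(A)}(j_{\ip*}k(\ip),r^\times M)\cong\rhom^\bullet_A(i_{\ip*}k(\ip),M)$, and conservativity of $r_*$ then gives the equality of cosupports. Your Yoneda verification of that internal adjunction is a sound expansion of the paper's one-line justification; since it is natural in every $P\in\D(A)$, it already gives an isomorphism of objects in $\D(A)$, so the separate check on shifts is unnecessary.
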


\begin{proof}
   The internal adjunction $r_*\dashv r^\times$, yields, for all $\ip\in\spec(\Ho^0(A))$ that
\[
r_{*}\rhom^\bullet_{\Ho^0(A)}(j_{\ip*}k(\ip),r^\times M) \cong 
\rhom^\bullet_A(i_{\ip *}k(\ip),M).
\]
The functor $r_*$ is conservative so the result follows.
\end{proof}

The following results are analogous to Theorem \ref{localizantesarribaabajo} and Corollary \ref{soportepequeno} for colocalizing subcategories.

\begin{theorem} Let $A$ be a bounded commutative non positive DG-algebra such that $\Ho^0(A)$ is point generated. The correspondences

\begin{equation*}
\begin{tikzpicture}[baseline=(current  bounding  box.center)]
\matrix(m)[matrix of math nodes, row sep=3.5em, column sep=6em,
text height=1.5ex, text depth=0.25ex]{  \small
\left\{\begin{gathered} 
        \text{Colocalizing}\\
        \text{subcategories of\/ } \D(A)
    \end{gathered}\right\} & \small
\left\{\begin{gathered}    
        \text{Colocalizing}\\
        \text{subcategories of\/ } \D(\Ho^0(A))
    \end{gathered}\right\} \\
  };
\draw [transform canvas={yshift= 0.4ex},font=\scriptsize,->]
(m-1-1) -- node[above]{$\tau$} (m-1-2);
\draw [transform canvas={yshift=-0.4ex},font=\scriptsize,->]
(m-1-2) -- node[below]{$\mu$} (m-1-1);
\end{tikzpicture}
\end{equation*}

defined by
\begin{align*}
    \tau(\SC_A)=\col_{\Ho^0(A)}(\{r^\times M\ |\ M\in\SC_A\})\\
    \mu(\SC_{\Ho^0(A)})=\col_A(\{r_*N\ |\ N\in\SC_{\Ho^0(A)}\})
\end{align*}
are mutually inverse.
\end{theorem}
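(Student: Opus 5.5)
We mimic the proof of Theorem \ref{localizantesarribaabajo}, now using the colocalizing classification of Theorem \ref{clascolocalizantesdga}. Since $\Ho^0(A)$ is point generated, Theorem \ref{k(p)generan} shows that $A$ is point generated as well. Hence both $\CW_A,\CC_A$ and $\CW_{\Ho^0(A)},\CC_{\Ho^0(A)}$ are mutually inverse bijections with $\CP(\spec(\Ho^0(A)))$. It therefore suffices to prove two identities:
\[
\CW_A(\mu(\SC_{\Ho^0(A)}))=\CW_{\Ho^0(A)}(\SC_{\Ho^0(A)}),
\qquad
\CW_{\Ho^0(A)}(\tau(\SC_A))=\CW_A(\SC_A).
\]
The first shows $\mu=\CC_A\circ\CW_{\Ho^0(A)}$, so $\mu$ is a bijection. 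The second then shows that $\tau$ is its inverse.

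First I establish the cosupport version of Corollary \ref{clasificacionlocalizantes2}: for a colocalizing $\SC=\col(I)$ we have $\CW(\SC)=\bigcup_{M\in I}\cosupp(M)$. The inclusion $\supseteq$ follows from Lemma \ref{homcero}. For $\subseteq$, let $W_I$ be the union of the cosupports and consider the class of $N$ with $\rhom^\bullet(i_{\iq*}k(\iq),N)=0$ for all $\iq\notin W_I$. This class is colocalizing, since $\rhom^\bullet(X,-)$ is triangulated and commutes with products. It contains $I$, hence it contains $\SC$. So if $\iq\notin W_I$ and $i_{\iq*}k(\iq)\in\SC$, then taking $N=i_{\iq*}k(\iq)$ gives $\rhom^\bullet(i_{\iq*}k(\iq),i_{\iq*}k(\iq))=0$, which contradicts Lemma \ref{tensorhomvanishesdga}. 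This argument works verbatim over $A$ and over $\Ho^0(A)$.

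With this in hand, the $\tau$ identity follows from Lemma \ref{soportesuperpor}:
\[
\CW_A(\SC_A)=\bigcup_{M\in\SC_A}\cosupp_A(M)=\bigcup_{M\in\SC_A}\cosupp_{\Ho^0(A)}(r^\times M)=\CW_{\Ho^0(A)}(\tau(\SC_A)).
\]
For the $\mu$ identity we need the analogue of Corollary \ref{soportepequeno}, namely $\cosupp_A(r_*N)=\cosupp_{\Ho^0(A)}(N)$. With this, $\CW_A(\mu(\SC_{\Ho^0(A)}))=\bigcup_{N}\cosupp_A(r_*N)=\CW_{\Ho^0(A)}(\SC_{\Ho^0(A)})$.

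The main obstacle is this last cosupport identity. By the internal adjunction (2), $\rhom^\bullet_A(i_{\ip*}k(\ip),r_*N)\cong\rhom^\bullet_A(r_*j_{\ip*}k(\ip),r_*N)\cong r_*\rhom^\bullet_{\Ho^0(A)}(j_{\ip*}k(\ip),r^\times r_*N)$, so the question is whether $r^\times r_*$ alters the $k(\ip)$-cosupport. I would avoid computing $r^\times r_*N$ directly and argue via classes instead.

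The class $\SC'=\{N\mid \cosupp_A(r_*N)\subseteq\cosupp_{\Ho^0(A)}(N)\}$ is not obviously colocalizing, so I argue for each $\iq$ separately. Consider the class of $N$ with $\rhom^\bullet_{\Ho^0(A)}(j_{\iq*}k(\iq),N)=0$. It is colocalizing, so it equals $\CC_{\Ho^0(A)}(W)$ with $W=\spec(\Ho^0(A))\smallsetminus\{\iq\}$, because by the characterization above its $\CW$ is exactly $W$. The functor $r_*$ preserves products and triangles, and it sends each generator $j_{\ip*}k(\ip)$ with $\ip\in W$ to $i_{\ip*}k(\ip)$. That object lies in the colocalizing class $\{X\mid\rhom^\bullet_A(i_{\iq*}k(\iq),X)=0\}$ by Lemma \ref{tensorhomvanishesdga}, so $r_*$ maps the whole class into it. This gives $\cosupp_A(r_*N)\subseteq\cosupp_{\Ho^0(A)}(N)$.

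For the reverse inclusion I use Lemma \ref{soportesuperpor}, which gives $\cosupp_A(r_*N)=\cosupp_{\Ho^0(A)}(r^\times r_*N)$. If $\iq\in\cosupp(N)$, then by Lemma \ref{homcero} $j_{\iq*}k(\iq)\in\col(N)$. Applying the triangulated, product-preserving functor $r_*$ gives $i_{\iq*}k(\iq)\in\col_A(r_*N)$. The characterization above then puts $\iq\in\cosupp_A(r_*N)$.
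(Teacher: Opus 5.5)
Your proof is correct and follows the paper's route. The paper only says the argument is analogous to Theorem~\ref{localizantesarribaabajo} using Lemma~\ref{soportesuperpor}, that is, show $\mu=\CC_A\circ\CW_{\Ho^0(A)}$ and $\CW_{\Ho^0(A)}\circ\tau=\CW_A$ through the two classifications. You also supply, with sound arguments, two ingredients the paper leaves implicit: the cosupport analogue of Corollary~\ref{clasificacionlocalizantes2}, and the identity $\cosupp_A(r_*N)=\cosupp_{\Ho^0(A)}(N)$, which the paper records only afterwards as a corollary.
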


\begin{proof}
The proof is analogous to Theorem \ref{localizantesarribaabajo}, using Lemma \ref{soportesuperpor}.
\end{proof}

\begin{corollary}
For all $M\in\D(\Ho^0(A))$  it holds that
    \[
\cosupp_{\Ho^0(A)}(M)=\cosupp_A(r_*(M)).
    \]
\end{corollary}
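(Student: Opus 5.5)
We prove the identity $\cosupp_{\Ho^0(A)}(M)=\cosupp_A(r_*M)$ through the colocalizing classification, mirroring how Corollary \ref{soportepequeno} follows from Theorem \ref{localizantesarribaabajo}. A direct route through Lemma \ref{soportesuperpor} would need $r^\times r_* M \cong M$ up to cosupport, which fails in general since $r^\times r_*M = \rhom^\bullet_A(\Ho^0(A),M)$ has extra homology. So we avoid computing $r^\times r_*$ and instead use the bijections $\tau,\mu$ of the previous Theorem together with the cosupport description of $\CW$.

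\textbf{Step 1: cosupport version of Corollary \ref{clasificacionlocalizantes2}.} For $A$ point generated and $\SC=\col_A(I)$ we show $\CW_A(\SC)=\bigcup_{N\in I}\cosupp_A(N)$, and likewise over $\Ho^0(A)$. If $\ip\in\cosupp_A(N)$ with $N\in I\subset\SC$, then Lemma \ref{homcero} gives $i_{\ip*}k(\ip)\in\SC$. Conversely, set $W_I=\bigcup_{N\in I}\cosupp_A(N)$. By the Proposition preceding Theorem \ref{clascolocalizantesdga}, each $N\in I$ lies in $\col_A(\{i_{\ip*}k(\ip)\mid \ip\in\cosupp_A(N)\})$: apply that Proposition to $\col_A(N)$ and use Lemma \ref{homcero}. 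Hence $\SC\subseteq\CC_A(W_I)$, and so $\CW_A(\SC)\subseteq\CW_A(\CC_A(W_I))=W_I$ by Theorem \ref{clascolocalizantesdga}.

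\textbf{Step 2: compute $\CW$ of $\col_A(r_*M)$ in two ways.} Let $\SC_M=\col_{\Ho^0(A)}(M)$. By Step 1 over $\Ho^0(A)$, $\CW_{\Ho^0(A)}(\SC_M)=\cosupp_{\Ho^0(A)}(M)$. In the proof of the previous Theorem, $\mu$ corresponds to the identity on $\mathcal{P}(\spec\Ho^0(A))$, that is, $\CW_A(\mu(\SC'))=\CW_{\Ho^0(A)}(\SC')$. This is argued as for $\rho$: $\mu(\SC')$ is generated by the $r_*N$, and $r_*j_{\ip*}k(\ip)=i_{\ip*}k(\ip)$. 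We also check $\mu(\SC_M)=\col_A(r_*M)$. The inclusion $\supseteq$ is clear. For $\subseteq$, the subcategory $\{N\mid r_*N\in\col_A(r_*M)\}$ is colocalizing, because $r_*$ is triangulated and preserves products, and it contains $M$. Combining this with Step 1 over $A$ gives
\[
\cosupp_A(r_*M)=\CW_A(\col_A(r_*M))=\CW_{\Ho^0(A)}(\SC_M)=\cosupp_{\Ho^0(A)}(M).
\]

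\textbf{Main obstacle.} The delicate point is the identity $\CW_A\circ\mu=\CW_{\Ho^0(A)}$, which the paper only asserts by analogy. We would verify it via Step 1 over $A$: $\CW_A(\mu(\SC'))=\bigcup_{N\in\SC'}\cosupp_A(r_*N)$. This is circular unless we first handle residue fields, where $\cosupp_A(i_{\ip*}k(\ip))=\{\ip\}$ by Lemma \ref{soportekp}. Write $\SC'=\CC_{\Ho^0(A)}(W)$. Then $\mu(\SC')\supseteq\CC_A(W)$, and for the reverse inclusion we show $r_*\SC'\subseteq\CC_A(W)$ using colocalizing closure of $\{N\mid r_*N\in\CC_A(W)\}$. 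This yields $\mu(\CC_{\Ho^0(A)}(W))=\CC_A(W)$, which settles the obstacle. Point generation of $A$ is supplied by Theorem \ref{k(p)generan}.
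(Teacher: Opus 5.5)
Your proof is correct and is the derivation the paper leaves implicit. The corollary follows from the $\mu$-half of the preceding theorem, that is, $\mu\circ\CC_{\Ho^0(A)}=\CC_A$, combined with the cosupport analogue $\CW_A(\col_A(N))=\cosupp_A(N)$ of Corollary \ref{clasificacionlocalizantes2}(1), just as Corollary \ref{soportepequeno} follows from $\rho=\CL_A\circ\CV_{\Ho^0(A)}$.

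Two small remarks. First, in Step 1 the inclusion $\{\ip \mid i_{\ip*}k(\ip)\in\col_A(N)\}\subseteq\cosupp_A(N)$ does not follow from Lemma \ref{homcero} alone. It needs the easy fact that $\{X\mid \rhom^\bullet_A(i_{\ip*}k(\ip),X)=0\}$ is colocalizing; the paper also uses this tacitly in Theorem \ref{clascolocalizantesdga}. Second, your opening claim that the direct route fails is mistaken. By Lemma \ref{soportesuperpor}, the corollary is equivalent to $r^\times r_*M$ and $M$ having the same cosupport, so that equality does hold, even though $r^\times r_*M\not\cong M$.
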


\appendix

\section{Noetherian rings are point generated}

\begin{cosa}
By Definition \ref{defpg}, a ring $A$ is \emph{point generated} if the set $\{\,k(\ip)\,/\,\ip \in \spec(A)\}$ is a family of generators of $\D(A)$ as a localizing subcategory.

In \cite{coloc} we gave a proof that if $A$ is Noetherian then it is point generated. In fact, our proof was given for a Noetherian scheme. The result for a Noetherian ring already follows by Neeman's \cite{Nct}. Here, we provide a direct proof of this fact in the algebraic setting for the convenience of the reader.
\end{cosa}

\begin{remark}
 It is a consequence of \cite[Theorem 5.7]{AJS1} that any ring $A$ (not necessarily Noetherian) is point generated precisely when a complex $M \in \D(A)$ is zero if, and only if, $\rhom^\bullet_A(k(\ip),M) = 0$ , for all $\ip \in \spec(A)$ .
\end{remark}

 \begin{lemma}\label{critpunt}
Let $A$ be a Noetherian ring and $\im$ a maximal ideal. For any $M \in \D(A)$, the complex $\R \Gamma_{\im}M\in \Loc_A{(\{k(\im)\})}$.
 \end{lemma}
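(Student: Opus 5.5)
We need to prove that $\R\Gamma_{\im}M \in \Loc_A(\{k(\im)\})$ for $A$ Noetherian, $\im$ maximal and any $M\in\D(A)$. The plan is to first reduce to a single, well-understood object, namely $\R\Gamma_{\im}A$, and then to show that this object is built from $k(\im)$ by finitely many cones and a countable homotopy colimit.

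\emph{Step 1: tensor reduction.} Since $A$ is Noetherian we have $\R\Gamma_{\im}M \cong \R\Gamma_{\im}A\otimes^{\LL}_A M$. This holds because $\R\Gamma_{\im}$ may be computed by tensoring with the stable Koszul (\v{C}ech) complex on a finite set of generators $x_1,\dots,x_n$ of $\im$, which is a bounded complex of flat modules. So it suffices to show $\R\Gamma_{\im}A \in \Loc_A(\{k(\im)\})$. Indeed, once this is known, Lemma \ref{tensoridealDGA} (applied to the ordinary ring $A$, regarded as a DG-algebra concentrated in degree $0$, so that every localizing subcategory is a $\otimes$-ideal) gives $\R\Gamma_{\im}A\otimes^{\LL}_A M\in\Loc_A(\{k(\im)\})$.

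\emph{Step 2: express $\R\Gamma_{\im}A$ as a homotopy colimit.} Let $K(x^t)$ denote the Koszul complex on $x_1^t,\dots,x_n^t$, and let $K^\vee(x^t)=\Hom_A(K(x^t),A)$ be its dual. The stable Koszul complex is the direct limit of the $K^\vee(x^t)$ (suitably shifted) along the natural transition maps. Since this is a sequential colimit of complexes, we get
\[
\R\Gamma_{\im}A\cong \hocolim{t} K^\vee(x^t),
\]
and the homotopy colimit is the cone of $1-\mathrm{shift}$ on $\bigoplus_t K^\vee(x^t)$. A localizing subcategory is closed under such cones and coproducts, so it is enough to show that each $K^\vee(x^t)\in\Loc_A(\{k(\im)\})$.

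\emph{Step 3: each Koszul complex lies in the thick closure of $k(\im)$.} The complex $K^\vee(x^t)$ is bounded, and its homology modules are finitely generated and annihilated by $(x_1^t,\dots,x_n^t)$. Since $(x_1^t,\dots,x_n^t)\supseteq\im^{nt}$ and $A/\im^{nt}$ is Artinian, each homology module $H$ has finite length. A finite composition series $0=H_0\subset\cdots\subset H_\ell=H$ has all quotients $H_{i}/H_{i-1}\cong k(\im)$, so the corresponding short exact sequences, read as distinguished triangles, put $H$ in the triangulated closure of $k(\im)$. Finally, a bounded complex is an iterated extension of its shifted homologies via the truncation triangles of \ref{truncamientoint}, exactly as in the induction of Lemma \ref{eldelostruncamientos}. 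Hence $K^\vee(x^t)\in\Loc_A(\{k(\im)\})$.

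\emph{Expected obstacle.} The main technical point is Step 2, together with the tensor formula of Step 1: identifying $\R\Gamma_{\im}$ with the stable Koszul complex, which is where Noetherianity enters. Here I would cite the standard fact for Noetherian rings (the Greenlees--May/Grothendieck comparison), and check that the direct limit of complexes agrees with the homotopy colimit because sequential direct limits are exact.
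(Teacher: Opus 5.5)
Your proof is correct, but it takes a different route from the paper.

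\textbf{The paper's route.} The paper never reduces to $M=A$ and never uses Koszul or \v{C}ech complexes. It writes $\Gamma_{\im}$ as the direct limit of the functors $\Hom_A(A/\im^n,-)$. Computing on a K-injective resolution, and using that a sequential direct limit of complexes is a homotopy colimit, this exhibits $\R\Gamma_{\im}M$ as the homotopy colimit of the complexes $\rhom^\bullet_A(A/\im^n,M)$. It then shows each of these lies in $\Loc_A(\{k(\im)\})$ by induction on $n$. The induction uses the triangles coming from $0\to\im^n/\im^{n+1}\to A/\im^{n+1}\to A/\im^n\to 0$. It also uses the observation that, for a $k(\im)$-vector space $V$, the complex $\rhom^\bullet_A(V,M)$ is the restriction of a complex of $k(\im)$-vector spaces, hence a coproduct of shifts of $k(\im)$. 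In short, the paper filters the \emph{source} $A/\im^n$ of a Hom and treats an arbitrary $M$ directly.

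\textbf{Your route.} You filter the homology of the perfect complexes $K^\vee(x^t)$ over $A$ alone. You then pass from $A$ to $M$ through the tensor-ideal property of Lemma \ref{tensoridealDGA}.

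\textbf{What each buys.} The paper's argument is more elementary and self-contained. Its only inputs are the direct-limit description of $\Gamma_{\im}$ and the exactness of sequential direct limits, both valid over any ring. Yours rests on a deeper cited fact: the identification $\R\Gamma_{\im}M\cong \check{C}(x)\otimes_A M$ for \emph{unbounded} $M$. This holds for Noetherian $A$, or more generally for weakly proregular sequences, but it is not a triviality, and it is exactly where you spend the Noetherian hypothesis. (You also use Noetherianity for finite length in Step 3, but there it is inessential. Annihilation by a power of $\im$ plus the $\im$-adic filtration of each homology module already puts it in $\Loc_A(\{k(\im)\})$.) In exchange, your argument yields a sharper structural statement: $\R\Gamma_{\im}A$ is a homotopy colimit of perfect complexes, each lying in the thick subcategory generated by $k(\im)$. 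It also fits naturally with the tensor-ideal viewpoint used throughout Section \ref{clasloc}.
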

 
 \begin{proof}
 As $A$ is Noetherian, the functor \emph{local cohomology} with respect to the ideal $\im$ is computed as 
  \[
  \Gamma_{\im}(-) =\dirlim{n > 0}\Hom_{A}(A/\im^n, -).
  \]
  Then,  using B\"okstedt-Neeman notion of countable homotopy colimits \cite[Definition 2.1]{BN}, we can express
  \[
  \R\Gamma_{\im}M \cong\! \hocolim{n > 0} \rhom^\bullet_A(A/\im^n, M),
  \]
for each $M \in \D(A)$.

It holds that $\R {i_{\im}}_*i_{\im}^\times M = \rhom^\bullet_A(A/\im^n, M)$, where $i_{\im} \colon A \to A/\im$ denotes the canonical morphism. Then it follows that if the complex $\rhom^\bullet_A(A/\im^n, M)$ belongs to $\Loc_A{(\{k(\im)\})}$, and so does $\rhom^\bullet_A(\im^n / \im^{n+1}, M)$ because the natural map
\[
\R {i_{\im}}_*i_{\im}^\times \rhom^\bullet_A(\im^n / \im^{n+1}, M)  \cong 
\rhom^\bullet_A(\im^n / \im^{n+1}, M) 
\]
is an isomorphism
for any $n\geq 1$. Starting with $n=1$, the existence of the distinguished triangles
\[\small
 \rhom^\bullet_A(A/\im^n, M) \lto
 \rhom^\bullet_A(A/\im^{n+1}, M), \CG) \lto 
 \rhom^\bullet_A(\im^n/\im^{n+1}, M) \overset{+}{\lto},
\]%
yields by induction that 
$
 \rhom^\bullet_A(A/\im^n, M) \in \Loc_A{(\{k(\im)\})},
$
for all $n\geq 1$. So, $\R \Gamma_{\im}M$ belongs to  $\Loc_A{(\{k(\im)\})}$.
\end{proof} 

\begin{remark}
Compare the previous Lemma with \cite[\S2]{Nct}.
\end{remark}

\begin{theorem}
\label{LocK(x)generan}
A Noetherian ring $A$ is point generated, \emph{i.e.} 
\[
\D(A) = \Loc_A(\{\,k(\ip)\,/\,\ip \in \spec(A)\}).
\]
\end{theorem}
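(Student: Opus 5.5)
We want to show $\D(A)=\Loc_A(\{k(\ip)\})$ for $A$ Noetherian. By the Remark preceding Lemma \ref{critpunt} (a consequence of \cite[Theorem 5.7]{AJS1}), it suffices to prove that if $M\in\D(A)$ satisfies $\rhom^\bullet_A(k(\ip),M)=0$ for all $\ip\in\spec(A)$, then $M=0$. Equivalently, with $\SL=\Loc_A(\{k(\ip)\})$, we show $\SL^\perp=0$; this also follows directly from the Bousfield triangle used in Proposition \ref{compactamentegenerada}, since $\SL$ is generated by a set.

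So fix $M\neq 0$; we aim to produce a prime $\ip$ with $\rhom^\bullet_A(k(\ip),M)\neq0$. We first reduce to a local situation. Pick a prime $\ip$ minimal among primes in the ``support'' of $M$, in the sense that $M_\ip\neq0$. Such a $\ip$ exists by Zorn's lemma. Indeed, $M\neq0$ forces $M_\im\neq0$ for some maximal $\im$. Moreover, for a descending chain of primes $\ip_\lambda$ with $M_{\ip_\lambda}\neq0$, the intersection $\iq$ satisfies $A_\iq=\dirlim{}A_{\ip_\lambda}$, since every $s\notin\iq$ lies outside some $\ip_\lambda$. Hence $\Ho^n(M)_\iq=\dirlim{}\Ho^n(M)_{\ip_\lambda}$, and since $\Ho^n(M)_{\ip_\lambda}$ is a localization of $\Ho^n(M)_{\ip_\mu}$ for $\lambda\le\mu$, the limit is nonzero once some term is nonzero. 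Replacing $A$ by $A_\ip$ and $M$ by $M_\ip$ is harmless by flat base change (\ref{cambiobase_co}), because $k(\ip)$ is an $A_\ip$-module and
\[
\rhom^\bullet_A(k(\ip),M)\cong\rhom^\bullet_{A_\ip}(k(\ip),M_\ip).
\]
This holds because $k(\ip)$ is a localization of $A/\ip$, which is finitely presented over Noetherian $A$; concretely, $\rhom_A(A/\ip,M)_\ip\cong\rhom_{A_\ip}(k(\ip),M_\ip)$, and for a module over $A_\ip$ these agree.

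We may therefore assume $A$ is local with maximal ideal $\im=\ip$ and that $M_\iq=0$ for every $\iq\neq\im$. Then every homology module $\Ho^n(M)$ is supported at $\{\im\}$, hence $\im$-torsion. It follows that $\R\Gamma_\im M\cong M$. To justify this, one compares via the triangle $\R\Gamma_\im M\to M\to \check{C}$, where $\check{C}$ is built from localizations $M_f$ for $f\in\im$ through the Čech complex on generators of $\im$; all these vanish because each $\Ho^n(M_f)=\Ho^n(M)_f=0$. By Lemma \ref{critpunt}, $M\cong\R\Gamma_\im M\in\Loc_A(\{k(\im)\})$. But $M\in\SL^\perp$ and $M\in\SL$ together force $\Hom(M,M)=0$, so $M=0$, a contradiction. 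Thus $\SL^\perp=0$ and $\SL=\D(A)$.

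The main obstacle I expect is the localization step: checking that the orthogonality hypothesis descends from $A$ to $A_\ip$, i.e. that $\rhom$ out of $k(\ip)$ commutes with localization, and justifying $\R\Gamma_\im M\cong M$ for unbounded complexes with $\im$-torsion homology. For the latter I would use the Čech description of $\R\Gamma_\im$, valid for Noetherian $A$, rather than argue degreewise.
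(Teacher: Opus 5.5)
\textbf{The gap is the reduction from $A$ to $A_\ip$.} Your local step is sound. The isomorphism $\rhom^\bullet_A(k(\ip),M)\cong\rhom^\bullet_{A_\ip}(k(\ip),M_\ip)$ that the reduction relies on fails in general when $\ip$ is not maximal. The adjunction for $u\colon A\to A_\ip$ gives instead $\rhom^\bullet_A(k(\ip),M)\cong\rhom^\bullet_{A_\ip}\big(k(\ip),\rhom^\bullet_A(A_\ip,M)\big)$. Here the colocalization $\rhom^\bullet_A(A_\ip,-)$ appears, not $-\otimes_A A_\ip$. Your phrase ``for a module over $A_\ip$ these agree'' confuses the two, applied to $\rhom^\bullet_A(A/\ip,M)$.

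Concretely, take $A=\ZZ$ and $M=\widehat{\ZZ}_p$.
\begin{itemize}
\item $M_{(0)}=\mathbb{Q}_p\neq 0$, so $(0)$ is the minimal prime you would choose, and $\rhom^\bullet_{\mathbb{Q}}(\mathbb{Q},\mathbb{Q}_p)\neq 0$.
\item Yet $\rhom^\bullet_\ZZ(\mathbb{Q},\widehat{\ZZ}_p)=0$, because $\widehat{\ZZ}_p\cong\rhom^\bullet_\ZZ(\ZZ/p^\infty,\ZZ/p^\infty)$ and $\mathbb{Q}\otimes_\ZZ\ZZ/p^\infty=0$.
\item The only residue field detecting $\widehat{\ZZ}_p$ through $\rhom^\bullet_\ZZ(k(-),-)$ is $\mathbb{F}_p$.
\end{itemize}
So a minimal prime of the support is the wrong prime for a right-orthogonality argument. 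Nothing you wrote shows that $M\in\SL^\perp$ forces $M_\ip\in\SL_{A_\ip}^\perp$. All that $M_\ip\in\Loc_A(k(\ip))\subseteq\SL$ gives you is $\Hom_{\D(A)}(M_\ip,M[n])=0$, which contradicts nothing.

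\textbf{What is missing is a local-to-global d\'evissage over $\spec(A)$.} A single localization cannot replace it. Your local step is exactly the input the paper uses: a complex over $A_\iq$ with $\iq$-torsion homology equals its $\R\Gamma_{\iq A_\iq}$, hence lies in $\Loc(k(\iq))$ by Lemma \ref{critpunt}.

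The paper then proves $M\in\SL$ directly, with no orthogonals at all:
\begin{enumerate}
\item Zorn's lemma gives a maximal specialization-closed $Z$ with $\R\Gamma_Z M\in\SL$. Chains are handled because, for $J$ K-injective, $\Gamma_{\cup Z_\alpha}J$ is the direct limit of the $\Gamma_{Z_\alpha}J$.
\item If $Z\neq\spec(A)$, pick $\iq\notin Z$ with $\overline{\{\iq\}}$ minimal among closures meeting the complement. Then $Z\cup\{\iq\}$ is specialization-closed.
\item The triangle $\R\Gamma_Z M\to\R\Gamma_{Z\cup\{\iq\}}M\to\R\Gamma_{\overline{\{\iq\}}}M_\iq\overset{+}{\lto}$ has its third term in $\Loc_A(k(\iq))$, contradicting maximality.
\end{enumerate}
You can keep your local argument and replace the orthogonality/minimal-prime reduction by this assembly.

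\textbf{Two smaller points.}
\begin{itemize}
\item Your direct-limit argument for the existence of a minimal prime is wrong as stated, since localizations can kill. For example, $\bigoplus_n A/\ip_n$ along an infinite strictly descending chain of primes in a non-Noetherian ring is nonzero at every $\ip_n$ but zero at $\bigcap_n\ip_n$. It is harmless here, because every prime of a Noetherian ring has finite height.
\item Proposition \ref{compactamentegenerada} does not supply the Bousfield triangle for $\SL$, since the $k(\ip)$ are not compact. You do need \cite[Theorem 5.7]{AJS1} for that.
\end{itemize}
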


\begin{proof} 
Set $\SL_A : = \Loc_A(\{\,k(\ip)\,/\,\ip \in \spec(A)\})$. For $M \in \D(A)$, let $\CF$ be the family of stable for specialization subsets $Z \subset \spec(A)$ such that $\R{}\Gamma_Z M \in \SL_A$. If $\{Z_\alpha\}_{\alpha \in \Lambda}$ is any chain of elements of $\CF$, and $M \to J$ is a K-injective resolution of $M$, by \cite[Theorem 2.2 and Theorem 3.1]{AJS1},
\[
\R{}\Gamma_{\cup Z_\alpha} M = \Gamma_{\cup Z_\alpha} J =
\dirlim{\alpha \in \Lambda} \Gamma_{Z_\alpha} J ,
\]
belongs to $\SL_A$, because  $\R{}\Gamma_{Z_\alpha}M = \varGamma_{Z_\alpha} J \in \SL_A$, for all $\alpha \in \Lambda$. Therefore  $\cup Z_\alpha \in \CF$. So there exists a maximal element $Z\in \CF$. Let $Y : = \spec(A) \setminus Z$, and suppose that $Y \neq \emptyset$. As $A$ is Noetherian, the family of closed subsets
\[
\CT = \Big\{\, \,\overline{\{\ir\}}\,\, \mid \,\,\ir\in \spec(A) \text{ such that } 
\overline{\{\ir\}} \cap Y \neq \emptyset \,\, \Big\}
\]
has a minimal element $\overline{\{\iq\}}\in \CT$. If $\ip \in \overline{\{\iq\}} \cap
Y$, then $\overline{\{\ip\}} \in \CT$, but
$\overline{\{\iq\}}$ is minimal, so $\ip = \iq$ and therefore $Z \cup \overline{\{\iq\}} = Z
\cup \{\iq\}$. 
Denote by $Z_\iq$ the set $\spec(A) \setminus \spec(A_\iq)$. There exists a distinguished triangle
\[
\R{}\Gamma_{Z_\iq} M \lto M \lto M_\iq \overset{+}{\lto}
\]
Apply the functor $\R{}\Gamma_{Z \cup \{\ip\}}$ to obtain the triangle
\[ 
\R{}\Gamma_{Z}  M \lto \R{}\Gamma_{Z \cup \{\ip\}} M 
\lto \R{}\Gamma_{\overline{\{\iq\}}} M_\iq \overset{+}{\lto},
\]
where, by Proposition~\ref{critpunt}, the third point $\R{}\Gamma_{\overline{\{\iq\}}} M_\iq$ belongs to $\Loc_A{(\{k(\iq)\})}$ so $\R{}\Gamma_{\overline{\{\iq\}}} M_\iq \in \SL_A$. As a consequence, $Z \cup \{\iq\}\in \CF$, contradicting the maximality of $Z$.
Necessarily, $Z = X$, therefore  $M \in \SL_A$.
\end{proof}

\begin{remark}
This theorem follows also from the results in \cite{Nct}, but the proof presented here is somewhat shorter. 

In the non Noetherian case, Neeman in \cite{Oddball} built a non Noetherian ring $R$ of dimension zero that is \emph{not point generated}. Despite these counterexamples, 
Stevenson has shown in \cite{St14} that there are plenty of non Noetherian rings that are point generated.
\end{remark}


\end{document}